\newtheorem{theorem}{Theorem}[section]
\newtheorem{lemma}[theorem]{Lemma}
\newtheorem{corollary}[theorem]{Corollary}
\newtheorem{proposition}[theorem]{Proposition}
 \theoremstyle{definition}
 \newtheorem{example}[theorem]{Example}
\numberwithin{equation}{section}
\newcommand {\Z}{\mathbb{Z}} 
\newcommand {\R}{\mathbb{R}} 
\newcommand {\Q}{\mathbb{Q}} 
\newcommand {\C}{\mathbb{C}} 
\newcommand{\T}{\mathbb{T}}
\newcommand{\CC}{\mathcal{C}}
\DeclareMathOperator{\Fix}{Fix}
\DeclareMathOperator{\Ker}{Ker}
\begin{document}
\title[A Garden of Eden theorem]{A Garden of Eden theorem for principal algebraic actions}
\author{Tullio Ceccherini-Silberstein}
\address{Dipartimento di Ingegneria, Universit\`a del Sannio, C.so
Garibaldi 107, 82100 Benevento, Italy}
\email{tullio.cs@sbai.uniroma1.it}
\author{Michel Coornaert}
\address{Universit\'e de Strasbourg, CNRS, IRMA UMR 7501, F-67000 Strasbourg, France}
\email{michel.coornaert@math.unistra.fr}
\subjclass[2010]{37D20, 37C29, 22D32, 22D45}
\keywords{Garden of Eden theorem, principal algebraic action, homoclinic group, Pontryagin duality, topological rigidity, Moore property, Myhill property, expansive action}
\begin{abstract}
Let $\Gamma$ be a countable abelian group  
and $f \in \Z[\Gamma]$, where $\Z[\Gamma]$ denotes the integral group ring  of $\Gamma$. 
Consider the Pontryagin dual  $X_f$ of the cyclic $\Z[\Gamma]$-module 
 $\Z[\Gamma]/\Z[\Gamma] f$ and 
suppose that  the natural action of $\Gamma$ on $X_f$ is expansive and that $X_f$ is connected.
We prove that if $\tau \colon X_f \to X_f$ is a $\Gamma$-equivariant continuous map, then $\tau$ is surjective  if and only if the restriction of $\tau$ to each $\Gamma$-homoclinicity class is injective.
 This is an analogue of the classical Garden of Eden theorem of Moore and Myhill for cellular automata with finite alphabet over $\Gamma$.
\end{abstract}
\date{\today}
\maketitle

\section{Introduction}

Consider a dynamical system $(X, \alpha)$, consisting of a compact metrizable space $X$,
called the \emph{phase space}, 
equipped with a continuous  action  $\alpha$ 
of a countable group $\Gamma$.
Let  $d$ be a metric on $X$ that is compatible with the topology.
Two points $x, y \in X$ are said to be \emph{homoclinic} if
$\lim_{\gamma \to \infty} d(\gamma x, \gamma y) = 0$, i.e.,
for every $\varepsilon > 0$, there exists a finite set $F \subset \Gamma$ such that  
$d(\gamma x,\gamma y) <  \varepsilon$ 
for all $\gamma \in \Gamma \setminus F$.
Homoclinicity is an equivalence relation on $X$.
By compactness of $X$, this relation does not depend of the choice of the compatible metric $d$.
 A map with source set   $X$ is called \emph{pre-injective}  (with respect to $\alpha$) 
 if its restriction to each homoclinicity class is injective.
  \par
An \emph{endomorphism} of the dynamical system $(X,\alpha)$ is a continuous map 
 $\tau \colon X \to X$ that is $\Gamma$-equivariant
 (i.e., $\tau(\gamma x) = \gamma \tau(x)$ for all $\gamma \in \Gamma$ and $x \in X$).
 \par
The original Garden of Eden theorem  is a statement in symbolic dynamics that characterizes surjective endomorphisms of shift systems with finite alphabet.
To be more specific, let us fix a compact metrizable space  $A$, called the \emph{alphabet}.
Given a countable group $\Gamma$,
the \emph{shift} over the group $\Gamma$ with alphabet $A$ is the dynamical system 
$(A^\Gamma,\sigma)$, where
$A^\Gamma = \{x \colon \Gamma \to A\}$ is equipped with the product topology and
  $\sigma $ is the action defined by
  $\gamma x(\gamma') \coloneqq  x(\gamma^{-1} \gamma')$ for all $x \in A^\Gamma$ and $\gamma,\gamma' \in \Gamma$.
The \emph{Garden of Eden theorem}  states that, under the hypotheses that the group  $\Gamma$ is amenable and the alphabet $A$ is finite,    an endomorphism of $(A^\Gamma,\sigma)$ is surjective if and only if it is pre-injective.
It was first proved for  $\Gamma = \Z^d$ by Moore and Myhill in the early 1960s.   
Actually, the implication
surjective $\implies$ pre-injective  was first proved by Moore in~\cite{moore}
while the converse implication was established shortly after by
Myhill in~\cite{myhill}.
The Garden of Eden theorem was subsequently extended to
finitely generated groups of subexponential growth by Mach{\`{\i}} and Mignosi~\cite{machi-mignosi} 
and finally to  all countable amenable groups by
Mach{\`{\i}},  Scarabotti, and the first author in~\cite{ceccherini}.
 \par
Let us  say that the  dynamical system $(X,\alpha)$ has the \emph{Moore property} if every surjective endomorphism of $(X,\alpha)$ is pre-injective and that it has the 
\emph{Myhill property} if 
every pre-injective endomorphism of $(X,\alpha)$ is surjective.
We say that the dynamical system $(X,\alpha)$ has the \emph{Moore-Myhill property},
or that it satisfies the \emph{Garden of Eden theorem},
if it has both the Moore and the Myhill properties.
It turns out that both the Moore and the Myhill properties  for shifts with finite alphabet characterize amenable groups.
Indeed, if $\Gamma$ is  non-amenable,
on one hand,
Bartholdi~\cite{bartholdi-ejm-2010} proved the existence of a finite alphabet $A$ such that 
$(A^\Gamma,\sigma)$ does not have the Moore property, and, on the other, Bartholdi and Kielak~\cite{bartholdi:2016}
proved the existence of a finite alphabet $B$ such that $(B^\Gamma,\sigma)$ does not have the Myhill property.
 \par
The  dynamical system  $(X,\alpha)$ is called \emph{expansive} if there exists a constant 
$\delta  > 0$ such that, for every pair of distinct points $x,y \in X$, there exists an element
$\gamma   \in \Gamma$ such that
$d(\gamma  x,\gamma  y) \geq  \delta$.
Such a constant $\delta$ is  called an \emph{expansiveness constant} for $(X,\alpha,d)$.  
The fact that $(X,\alpha)$ is expansive or not does not depend on the choice of the metric $d$.
For instance, the shift system $(A^\Gamma,\sigma)$ is expansive for every countable group $\Gamma$ whenever the alphabet $A$ is finite.
\par
The goal of the present paper is to establish a version of the Garden of Eden theorem for expansive principal algebraic systems with connected phase space over countable abelian groups. 
By an \emph{algebraic dynamical system}, we mean a dynamical system of the form 
$(X,\alpha)$, where $X$ is a compact metrizable abelian group and $\alpha$ is an  action of a countable group $\Gamma$ 
 on $X$ by  continuous group automorphisms.
 By Pontryagin duality, algebraic dynamical systems with acting group $\Gamma$ are in one-to-one correspondence with countable left 
 $\Z[\Gamma]$-modules. 
 Here $\Z[\Gamma]$ denotes the integral group ring of $\Gamma$.
 This correspondence has been intensively studied in the last decades 
 and revealed fascinating connections  between commutative algebra, number theory, harmonic analysis, ergodic theory,   and dynamical systems (see in particular the monograph~\cite{schmidt-book} and the survey~\cite{lind-schmidt-survey-heisenberg}).
 \par 
Let $f \in \Z[\Gamma]$ and consider the cyclic left $\Z[\Gamma]$-module
 $M_f \coloneqq \Z[\Gamma]/  \Z[\Gamma] f$ obtained by quotienting the ring $\Z[\Gamma]$ by the principal left ideal generated by $f$.
 The algebraic dynamical system  associated by Pontryagin duality with $M_f$
 is denoted by $(X_f,\alpha_f)$ and is 
 called the \emph{principal algebraic dynamical system} associated with $f$.
 \par 
Our main result is the following.

\begin{theorem}
\label{t:main-result}
Let  $\Gamma$ be a countable abelian group (e.g.~$\Gamma = \Z^d$) 
and $f \in \Z[\Gamma]$. 
Suppose that  the principal algebraic dynamical system $(X_f,\alpha_f)$ associated with $f$ is expansive and that  $X_f$ is connected.
Then the  dynamical system $(X_f,\alpha_f)$ has the Moore-Myhill property.
\end{theorem}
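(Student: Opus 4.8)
The plan is to set up a concrete shift model for $(X_f,\alpha_f)$ and then transport the classical counting architecture of Moore, Myhill, Mach\`\i{} and Scarabotti from finite alphabets to the compact connected alphabet $\T=\R/\Z$, using topological entropy (equivalently, the measure entropy of Haar measure) as the counting invariant in place of $\log$-cardinality. Concretely, by Pontryagin duality $X_f$ is the annihilator of $\Z[\Gamma]f$ in $\widehat{\Z[\Gamma]}=\T^\Gamma$, so $X_f=\{x\in\T^\Gamma : x*f^{*}=0\}$ is a closed shift-invariant subgroup of $\T^\Gamma$ and $\alpha_f$ is the restriction of the shift. Expansiveness is equivalent to $\hat f$ being nowhere zero on $\widehat\Gamma$; by Wiener's lemma this yields an inverse $w=f^{-1}\in\ell^1(\Gamma,\R)$, hence a summable fundamental homoclinic point and a description of the homoclinic group $\Delta=\Delta(X_f)$ of $0$ as a countable $\Gamma$-invariant subgroup consisting of the points of $X_f$ whose coordinates tend to $0$. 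Two further inputs are crucial: (i) since $X_f$ is connected, $\Delta$ is \emph{dense} in $X_f$; (ii) since $(X_f,\alpha_f)$ is expansive and sits inside a shift, a Curtis--Hedlund--Lyndon type theorem applies, so every endomorphism $\tau$ is a \emph{sliding block code}, i.e. there is a finite $M\subset\Gamma$ and a continuous local rule through which $\tau(x)(e)$ depends only on $x|_{M}$. Finally, I would record that the topological entropy $h:=h(X_f,\alpha_f)$ is finite (it equals the Mahler measure of $f$) and positive unless $f$ is a unit, in which case $X_f$ is trivial and the theorem is vacuous.

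With these tools the Garden of Eden theorem reduces, as in the amenable finite-alphabet case, to three statements about the image $Y:=\tau(X_f)$, a closed $\Gamma$-invariant subset of $X_f$:
\begin{enumerate}
\item[(M)] if $\tau$ is not pre-injective, then $h(Y)<h$;
\item[(m)] if $h(Y)<h$, then $\tau$ is not pre-injective;
\item[(R)] if $Y\subsetneq X_f$, then $h(Y)<h$.
\end{enumerate}
Granting these, Moore follows from (M) (surjective $\Rightarrow h(Y)=h\Rightarrow$ pre-injective) and Myhill follows from (R) and (m) (not surjective $\Rightarrow h(Y)<h\Rightarrow$ not pre-injective). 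For (M) I would start from a collapsing homoclinic pair $x\neq x'$ with $\tau x=\tau x'$; their difference $z=x-x'\in\Delta\setminus\{0\}$, and since $X_f$ is a group all translates $\gamma z$ lie in $X_f$. Using a F\o lner sequence $(F_n)$ I would tile large windows by translates of a big box on which $z$ is, by homoclinicity, uniformly small in the complement, and show that each box independently forces a definite collapse of the output-pattern volume; measuring volumes by $(F_n,\varepsilon)$-separated sets this yields a uniform gap $h(Y)\le h-c<h$. Statement (m) is the pigeonhole counterpart: if $h(Y)<h$ then on large windows the volume of output patterns is exponentially smaller than that of input patterns, so two inputs agreeing to within $\varepsilon$ off the window share an image, and a limiting argument using the dense decaying group $\Delta$ upgrades this to a genuine collapsing homoclinic pair.

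For (R) I would use the algebraic structure rather than a direct count. Take a measure of maximal entropy $\nu$ on $Y$; it exists because expansiveness makes the entropy map upper semicontinuous on the compact simplex of invariant measures. If $h(Y)=h$, then $\nu$ is also a measure of maximal entropy on the ambient system $X_f$, and the structural heart of the argument is that, for a connected expansive principal action, the Haar measure is the \emph{unique} measure of maximal entropy and has full support (this rests on the ergodicity and mixing forced by connectedness and expansiveness, in the spirit of Berg's theorem and the algebraic entropy theory of Lind and Schmidt). Hence $\nu$ equals Haar measure, so $Y\supseteq\operatorname{supp}\nu=X_f$, whence $Y=X_f$; this is the contrapositive of (R).

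The main obstacle is the passage from finite to connected alphabet in (M) and (m). In the finite-alphabet theory a collapsing pair produces patterns agreeing \emph{exactly} off a finite set, and the pigeonhole produces configurations agreeing \emph{exactly} off a window; here $X_f$ typically has no nonzero finitely supported points, so these exact statements are unavailable and every admissible perturbation is merely decaying, i.e. an element of $\Delta$. The real work is therefore to run the tiling and pigeonhole arguments at a fixed metric scale $\varepsilon$, to control the tails of the homoclinic perturbations via expansiveness, and to convert $\varepsilon$-approximate coincidences into exact homoclinic collapses by exploiting the density of $\Delta$ together with the group structure of $X_f$. Establishing the two volume/entropy estimates with these error terms under control, and pinning down the uniqueness of Haar as the measure of maximal entropy used in (R), are the steps I expect to be the most delicate.
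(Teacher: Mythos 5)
Your proposal takes a genuinely different route from the paper (entropy counting in the style of Moore, Myhill, Mach\`{\i}--Scarabotti plus measure rigidity, instead of topological rigidity plus homoclinic duality), but it has genuine gaps at its load-bearing steps, and they are not mere technicalities. The first is your input (ii): over a compact connected alphabet there is no Curtis--Hedlund--Lyndon theorem. Continuity plus equivariance yields only \emph{approximate} locality (for every $\varepsilon>0$ there is a finite window $M_\varepsilon$ such that closeness on $M_\varepsilon$ forces the images to be $\varepsilon$-close at $1_\Gamma$), never an exact finite-window rule: already on the full shift $\T^\Gamma$, enumerating $\Gamma = \{\gamma_0,\gamma_1,\dots\}$, the map $\tau$ defined by $\tau(x)(\gamma) \coloneqq \sum_{k\geq 0} 2^{-k}\sin\bigl(2\pi\, x(\gamma\gamma_k)\bigr) \bmod 1$ is continuous, $\Gamma$-equivariant, and depends on infinitely many coordinates. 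Expansiveness of $X_f$ does not repair this by any argument you give; in fact, for the systems at hand the only known way to obtain exact locality of endomorphisms is Bhattacharya's topological rigidity theorem (every endomorphism of a connected expansive algebraic system is affine, and here its linear part is $x \mapsto rx$ with $r \in \Z[\Gamma]$), which is precisely the paper's first ingredient and which your argument never establishes. So the sliding-block model on which your entire tiling set-up rests is either missing or circular, and supplying it is essentially as hard as the rigidity theorem itself.

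Second, the counting lemmas (M) and (m) --- the whole content of the Moore and Myhill directions --- are exactly the steps you leave open, and the obstruction you acknowledge is fatal to the method rather than ``delicate''. The finite-alphabet proofs hinge on \emph{exact} pattern exchange: a collapsing pair differs on a finite set, hence can be spliced into any configuration, producing exponentially many exact identifications of image patterns, and the pigeonhole over the finitely many patterns on a window produces exact coincidences. Over $\T$, homoclinic differences have infinite support, so every splice yields only $\varepsilon$-approximate identifications whose errors accumulate over the exponentially many tiles; a deficit of $(F_n,\varepsilon)$-separated sets at one fixed scale does not bound topological entropy, which is a limit as $\varepsilon \to 0$; and no pigeonhole over a continuum of patterns can force two images to be exactly equal. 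You offer no mechanism for closing these gaps, and the paper's own Example 5.2 (mixing, non-expansive, trivial homoclinic group) shows that an entropy deficit of the image does not by itself contradict pre-injectivity: the size and summability of $\Delta(X_f,\alpha_f)$ must enter quantitatively, which your sketch does not achieve. Finally, step (R) silently relies on Haar measure being the unique measure of maximal entropy for expansive principal actions of an \emph{arbitrary} countable abelian group; this is true, but it is a deep theorem (Lind--Schmidt--Ward for $\Z^d$, Chung--Li in general), far heavier than anything the paper needs. Compare with the paper's actual proof, which has three short steps: by rigidity every endomorphism has linear part $\lambda(x)=rx$; by Pontryagin duality and compactness, $\lambda$ is surjective if and only if multiplication by $r$ is injective on $\widehat{X_f} = \Z[\Gamma]/\Z[\Gamma]f$; and by the Lind--Schmidt isomorphism $\Delta(X_f,\alpha_f) \cong \Z[\Gamma]/\Z[\Gamma]f$ of $\Z[\Gamma]$-modules, that injectivity is the same as injectivity of $\lambda$ on the homoclinic group, i.e.\ pre-injectivity of $\tau$. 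No entropy, no F\o lner sequences, no measures of maximal entropy.
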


There are two main ingredients in our proof of this theorem.
The first one is a rigidity result of Bhattacharya~\cite{bhattacharya} for expansive algebraic systems with connected phase space.
We use it to prove that every endomorphism of $(X_f,\alpha_f)$ is affine with linear part of the form 
$x \mapsto r x$ for some $r \in \Z[\Gamma]$.
The second one is a result of Lind and Schmidt~\cite{lind-schmidt} which asserts that
the homoclinic group of $(X_f,\alpha_f)$ (i.e., the subgroup of $X$ consisting of all points homoclinic to $0_X$), equipped with the discrete topology and the induced action of $\Gamma$,  is  conjugate to
the Pontryagin dual $\Z[\Gamma]/ \Z[\Gamma] f$ of $X_f$.
\par 
Our motivation for the present work originated from a sentence of Gromov
\cite[p.~195]{gromov-esav}
suggesting that the Garden of Eden theorem could be extended to dynamical systems with a suitable hyperbolic flavor other than shifts and subshifts.
A first step in that direction  was made in~\cite{csc-anosov-tori}, where we proved that all Anosov diffeomorphisms on tori generate $\Z$-actions  with  the Moore-Myhill property,
and another one in~\cite{csc-ijm-2015},
where we gave sufficient conditions for  expansive actions of countable amenable groups
to have the Myhill property. 
\par
The paper is organized as follows.
Section~\ref{sec:background} introduces notation and collects background material on algebraic dynamical systems. 
In Section~\ref{sec:affine}, 
we discuss  topological rigidity of expansive algebraic dynamical systems.
The proof of Theorem~\ref{t:main-result}  
is given in Section~\ref{sec:proof}.
In the last section, we collect some
final remarks and exhibit some examples showing that Theorem~\ref{t:main-result} becomes false if the expansiveness hypothesis is replaced by the weaker hypothesis that the system $(X_f,\alpha_f)$ is mixing.

\section{Background material and preliminaries}
\label{sec:background}

\subsection{Group actions}
Let $\Gamma$ be a countable group. 
We use multiplicative notation for the group operation in $\Gamma$ and denote by $1_\Gamma$ its identity element.
\par 
An \emph{action} of  $\Gamma$ on a set $X$
is a map $\alpha \colon \Gamma \times X \to X$ such that
$\alpha(1_\Gamma,x) = x$ and
$\alpha(\gamma_1,\alpha(\gamma_2,x)) = \alpha(\gamma_1 \gamma_2,x)$ for all $\gamma_1,\gamma_2 \in \Gamma$ and $x \in X$. 
In the sequel, to simplify, we shall write
$\gamma x$ instead of $\alpha(\gamma, x)$,  if there is no risk of confusion.
\par
If $\alpha$ is an action of $\Gamma$ on a set $X$, we denote by $\Fix(X,\alpha)$ the set of points of $X$ that are \emph{fixed} by $\alpha$, i.e., the set of points $x \in X$ such that $\gamma x = x$ for all 
$\gamma \in \Gamma$.  
\par
If $\Gamma$ acts on two sets $X$ and $Y$, a map
$\tau \colon X \to Y$ is said to be $\Gamma$-\emph{equivariant} if one has
$\tau(\gamma x) = \gamma \tau(x)$ for all $\gamma \in \Gamma$ and $x \in X$.

\subsection{Convolution}
Let $\Gamma$ be a countable group.
We denote by $\ell^\infty(\Gamma)$  the vector space  consisting of all formal series
$$
f = \sum_{\gamma \in \Gamma} f_\gamma \gamma,
$$
with coefficients $f_\gamma \in \R$ for all $\gamma \in \Gamma$ and 
$$
\Vert f \Vert_\infty \coloneqq \sup_{\gamma \in \Gamma} |f_\gamma| < \infty.
$$
We denote by $\ell^1(\Gamma)$ the vector subspace of $\ell^\infty(\Gamma)$  
consisting of all $f \in \ell^\infty(\Gamma)$ such that
$$
\Vert f \Vert_1 \coloneqq \sum_{\gamma \in \Gamma} |f_\gamma| < \infty.
$$
The \emph{convolution product} of $f \in \ell^\infty(\Gamma)$ and $g \in \ell^1(\Gamma)$ is the element $f g \in \ell^\infty(\Gamma)$ defined by 
$$
(f g)_\gamma \coloneqq
\sum_{\substack{\gamma_1, \gamma_2 \in \Gamma:\\ \gamma_1\gamma_2 = \gamma}}  f_{\gamma_1} g_{\gamma_2}. 
 $$
 We have the
the associativity rule
$(f g) h = f (g h)$ for all $f \in \ell^\infty(\Gamma)$ and $g,h \in \ell^1(\Gamma)$.
\par  
The vector space $\ell^1(\Gamma)$ is a Banach *-algebra for the norm $\vert \cdot \vert_1$,
the convolution product, and the involution $f \mapsto f^*$ defined by
  \[
 (f^*)_\gamma \coloneqq  f_{\gamma^{-1}}
 \]
 for all $f\in \ell^1(\Gamma)$ and $\gamma \in \Gamma$.
 \par
  The \emph{integral group ring} $\Z[\Gamma]$ is the
subring of $\ell^1(\Gamma)$ consisting of all $f \in \ell^1(\Gamma)$ 
such that  $f_\gamma \in \Z$ for all $\gamma \in \Gamma$ and $f_\gamma = 0$ for all but finitely 
many $\gamma \in \Gamma$.
\par
Observe that the convolution product  extends the group operation on 
 $\Gamma \subset \Z[\Gamma]$.
\par
Note also that, as a $\Z$-module, $\Z[\Gamma]$ is free with base $\Gamma$. 
\par
If we take $\Gamma = \Z^d$, then $\Z[\Gamma]$ is the Laurent polynomial ring 
$R_d \coloneqq \Z[u_1^{\pm 1}, \dots, u_d^{\pm 1}]$ on $d$ commuting indeterminates 
$u_1,\dots, u_d$.

\subsection{Pontryagin duality}
\label{subsec:pontryagin}
 Let us briefly review some basic facts and results regarding Pontryagin duality.
For more details and complete proofs,
the reader is refered to~\cite{morris}. 
\par
Let $X$ be an LCA group, i.e., a locally compact, Hausdorff, abelian topological group.
A continuous group morphism from $X$ into the circle $\T \coloneqq \R/\Z$
is called a \emph{character} of $X$.
The set $\widehat{X}$ of all characters of $X$,
equipped with pointwise multiplication and the topology of uniform convergence on compact sets,
is also an LCA  group,
  called  the \emph{character group} or  \emph{Pontryagin dual}  of $X$.
  \par
The natural map $\langle \cdot, \cdot  \rangle \colon \widehat{X} \times X \to \T$,
given by $\langle \chi, x \rangle = \chi(x)$ for all $x \in X$ and $\chi \in \widehat{X}$ is bilinear and non-degenerate. 
Moreover, 
the evaluation map $\iota \colon X \to \widehat{\widehat{X}}$,
defined by $\iota(x)(\chi) \coloneqq \langle\chi,x\rangle$, 
is a topological group    isomorphism  from $X$ onto its bidual 
$\widehat{\widehat{ X}}$. 
This canonical isomorphism is used to identify $X$ with $\widehat{\widehat{ X}}$.
\par 
The space $X$ is
compact (resp.~discrete, resp.~metrizable, resp.~$\sigma$-compact)
if and only if $\widehat{X}$ is
discrete (resp.~compact, resp.~$\sigma$-compact, resp.~metrizable).
In particular, $X$ is compact and metrizable if and only if $\widehat{X}$ is discrete and countable.
When $X$ is compact, $X$ is connected if and only if $\widehat{X}$ is a \emph{torsion-free} group  (i.e., a group with  no non-trivial elements of finite order).
\par
If $X$ is an LCA group and $Y$ a closed subgroup of $X$, then $X/Y$ is an LCA group
whose Pontryagin dual is canonically isomorphic, as a topological group,
to the closed subgroup $Y^\perp$ of $\widehat{X}$ defined by
\[
Y^\perp \coloneqq \{\chi \in \widehat{X} : \langle \chi, y \rangle = 0 \text{ for all } y \in Y\}.
\]
\par  
Let $X, Y$ be LCA groups and $\varphi \colon X \to Y$ a continuous group morphism.
The map $\widehat{\varphi} \colon \widehat{Y} \to \widehat{X}$, defined by 
$\widehat{\varphi}(\chi) \coloneqq  \chi \circ \varphi$ for all $\chi \in \widehat{Y}$ is a continuous group morphism, called the \emph{dual} of $\varphi$.
If we identify $X$ and $Y$ with their respective biduals,
then $\widehat{\widehat{\varphi}} =\varphi $.
If $\varphi$ is surjective, then $\widehat{\varphi}$ is injective.
It may happen that $\varphi$ is injective while $\widehat{\varphi}$ is not surjective. 
However, if $\varphi$ is both injective and open, then $\widehat{\varphi}$ is surjective.
As a consequence, if $X$ and $Y$ are either both compact or both discrete, then  $\varphi$ is injective (resp.~surjective) if and only if $\widehat{\varphi}$ is surjective (resp.~injective)  
\cite[Proposition~30]{morris}.
\par
Let $X$ be an LCA group 
and suppose that there is a countable group $\Gamma$ acting continuously on $X$ by  group automorphisms. By linearity, this action induces a left $\Z[\Gamma]$-module structure on $X$.
There is a   dual action of $\Gamma$ on $\widehat{X}$ by continuous group automorphisms, defined by 
\[
\gamma  \chi(x) \coloneqq \chi(\gamma^{-1}  x) \quad \text{for all } \gamma \in \Gamma, x \in X, \text{ and } \chi \in \widehat{X}.
\]
Therefore  there is also a   left $\Z[\Gamma]$-module structure on $\widehat{X}$. 
Note that the canonical topological group isomorphism $\iota \colon X \to \widehat{\widehat{X}}$ is   
$\Gamma$-equivariant and hence a left $\Z[\Gamma]$-module isomorphism.

\subsection{Algebraic dynamical systems}
An \emph{algebraic dynamical system}
is a pair $(X,\alpha)$, where $X$ is a compact metrizable abelian topological group
and $\alpha$ is an action of a countable group $\Gamma$ on $X$ by continuous group automorphisms.
\par
As an example, if $A$ is a compact metrizable abelian topological group (e.g.~$A = \T$) and 
$\Gamma$ a countable group,
then the system $(A^\Gamma,\sigma)$, where
$A^\Gamma = \{x \colon \Gamma \to A\}$ is equipped with the product topology,  and $\sigma$ is the \emph{shift action}, defined by
\[
(\sigma(\gamma,x))(\gamma') \coloneqq x(\gamma^{-1} \gamma') \quad \text{for all  } \gamma, \gamma' \in \Gamma \text{ and }x \in A^\Gamma,
\]
is an algebraic dynamical system.
\par
Let $(X,\alpha)$ be an algebraic dynamical system with acting group $\Gamma$.
As $X$ is compact and metrizable, its Pontryagin dual $\widehat{X}$ is a discrete countable abelian group.
We have seen at the end of the previous subsection that there is a left $\Z[\Gamma]$-module structure on $\widehat{X}$ induced by the action of $\Gamma$ on $X$.
Conversely, if $M$ is a countable left $\Z[\Gamma]$-module and we equip $M$ with its discrete topology, then its Pontryagin dual $\widehat{M}$ is a compact metrizable abelian group
and there is, by duality, an action $\alpha_M$ of $\Gamma$ on $\widehat{M}$ by continuous group automorphisms,
so that
$(\widehat{M},\alpha_M)$ is an algebraic dynamical system.
In this way, algebraic dynamical systems with acting group $\Gamma$ are in one-to-one correspondence with countable left $\Z[\Gamma]$-modules.

\subsection{Principal algebraic dynamical systems}
Let $\Gamma$ be a countable group
and  $f = \sum_{\gamma \in \Gamma} f_\gamma \gamma \in \Z[\Gamma]$.
Consider the left $\Z[\Gamma]$-module $M_f \coloneqq  \Z[\Gamma]/ \Z[\Gamma] f$, where   
$ \Z[\Gamma] f$ is the principal left ideal of $\Z[\Gamma]$ generated by $f$.
To simplify notation, let us write $X_f$ instead of $X_{M_f}$ and $\alpha_f$ instead 
of $\alpha_{M_f}$. 
The  algebraic dynamical system $(X_f,\alpha_f)$ is called
the \emph{principal algebraic dynamical system} associated with $f$.
\par
One  can regard $(X_f,\alpha_f)$ as a \emph{subshift} of 
$(\T^\Gamma,\sigma)$, i.e., 
as a closed  subset of $\T^\Gamma$
that is invariant under the  shift action $\sigma$ of $\Gamma$ on $\T^\Gamma$, in the following way. The Pontryagin dual of $\T^\Gamma$ is $\Z[\Gamma]$ with pairing
$\langle \cdot,\cdot \rangle \colon \Z[\Gamma] \times \T^\Gamma \to \T$ given by
\[
\langle g, x \rangle = \sum_{\gamma \in \Gamma} g_{\eta} x(\eta) \quad 
\text{for all } g \in \Z[\Gamma], x \in \T^\Gamma.
\]
Therefore
\begin{align*}
X_f
&= \widehat{\Z[\Gamma]/\Z[\Gamma] f} \\
&= (\Z[\Gamma] f)^\perp \\
&= \{ x \in \T^\Gamma : \langle g,x\rangle = 0 \text{ for all } g \in \Z[\Gamma] f \} \\
&= \{ x \in \T^\Gamma : \langle \gamma f,x\rangle = 0 \text{ for all } \gamma \in \Gamma  \}, 
\end{align*}
that is,
\begin{equation}
\label{e:X-f-subshift}
X_f= \{ x \in \T^\Gamma : \sum_{\eta \in \Gamma} f_{\eta} x(\gamma \eta)  = 0 \text{ for all } \gamma \in \Gamma \}, 
\end{equation}
with the action $\alpha_f$ of $\Gamma$ on $X_f \subset \T^\Gamma$ obtained  by restricting to $X_f$  the  shift action $\sigma$.
\par
Consider the surjective map $\pi \colon \ell^\infty(\Gamma) \to \T^\Gamma$
defined by $\pi(g)(\gamma) = g_\gamma \mod 1$ for all $g \in \ell^\infty(\Gamma)$ and 
$\gamma \in \Gamma$.
Denote by $\ell^ \infty(\Gamma,\Z)$ the set consisting of all  $g \in \ell^\infty(\Gamma)$ such that $g_\gamma \in \Z$ for all $\gamma \in \Gamma$.

\begin{proposition}
\label{p:X-f-carac-lift}
Let $x \in \T^\Gamma$ and $g \in \ell^\infty(\Gamma)$ such that $\pi(g) = x$.
With the above notation,  the following conditions are equivalent:
\begin{enumerate}[\rm (a)]
\item
$x \in X_f$;
\item
$g f^* \in \ell^\infty(\Gamma,\Z)$.
\end{enumerate}
\end{proposition}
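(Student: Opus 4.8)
The plan is to reduce both conditions to the same family of integrality requirements on the coefficients of $g$, indexed by $\gamma \in \Gamma$, by directly unwinding the convolution $g f^*$. First I would note that $g f^*$ is well defined and lies in $\ell^\infty(\Gamma)$: since $f \in \Z[\Gamma]$ has finite support, so does $f^* \in \ell^1(\Gamma)$, and the convolution product of an element of $\ell^\infty(\Gamma)$ with an element of $\ell^1(\Gamma)$ was defined above to land in $\ell^\infty(\Gamma)$. Thus condition (b) is simply the assertion that every coefficient $(g f^*)_\gamma$ is an integer.

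The heart of the argument is a single coefficient computation. Using the definition of the convolution product together with the involution $(f^*)_{\gamma_2} = f_{\gamma_2^{-1}}$, I would write
$$(g f^*)_\gamma = \sum_{\substack{\gamma_1,\gamma_2 \in \Gamma:\\ \gamma_1\gamma_2 = \gamma}} g_{\gamma_1}(f^*)_{\gamma_2} = \sum_{\gamma_2 \in \Gamma} g_{\gamma\gamma_2^{-1}}\, f_{\gamma_2^{-1}},$$
and then perform the substitution $\eta = \gamma_2^{-1}$ (so that $\gamma_1 = \gamma\gamma_2^{-1} = \gamma\eta$) to obtain
$$(g f^*)_\gamma = \sum_{\eta \in \Gamma} f_\eta\, g_{\gamma\eta}.$$
This is the one place that requires care: the involution in $f^*$ is exactly what converts the convolution into the shifted sum $\sum_{\eta} f_\eta\, g_{\gamma\eta}$ whose shape matches the defining equations of $X_f$ in \eqref{e:X-f-subshift}. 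Note that this sum is finite because $f$ has finite support.

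Finally I would pass to $\T = \R/\Z$. Since $\pi(g) = x$ means $x(\gamma) = g_\gamma \bmod 1$ for all $\gamma$, and since the reduction map $\R \to \T$ is a group morphism under which integer combinations commute with reduction (the coefficients $f_\eta$ being integers), the displayed formula yields
$$\sum_{\eta \in \Gamma} f_\eta\, x(\gamma\eta) = \Big(\sum_{\eta \in \Gamma} f_\eta\, g_{\gamma\eta}\Big) \bmod 1 = (g f^*)_\gamma \bmod 1$$
in $\T$, for every $\gamma \in \Gamma$. By the characterization \eqref{e:X-f-subshift}, the condition $x \in X_f$ holds if and only if the left-hand side vanishes in $\T$ for all $\gamma$, which happens precisely when $(g f^*)_\gamma \in \Z$ for all $\gamma$, i.e. when $g f^* \in \ell^\infty(\Gamma,\Z)$. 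This establishes the equivalence of (a) and (b); since every step is itself an equivalence, no separate treatment of the two implications is needed. I expect the only genuine subtlety to be the index bookkeeping in the convolution-with-involution step, everything else being a routine passage to the quotient $\R/\Z$.
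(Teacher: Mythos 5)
Your proof is correct and follows essentially the same route as the paper's: both unwind the convolution to get $(g f^*)_\gamma = \sum_{\eta \in \Gamma} f_\eta\, g_{\gamma\eta}$ and then compare with the defining equations \eqref{e:X-f-subshift} of $X_f$ after reduction modulo $1$. The extra details you supply (index substitution, integrality of the $f_\eta$ commuting with the quotient map $\R \to \T$) are exactly the steps the paper leaves implicit.
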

  
\begin{proof}
By~\eqref{e:X-f-subshift}, we see that $x$ is in $X_f$ if and only if
\[
\sum_{\eta \in \Gamma} f_{\eta} g_{\gamma \eta} \in \Z
\]
for all $\gamma \in \Gamma$.
Now it suffices to observe that, by definition of the convolution product,
 \[
 (g f^*)_\gamma = 
 \sum_{\eta \in \Gamma} g_{\gamma \eta} f^*_{\eta^{-1}} =
 \sum_{\eta \in \Gamma} f_{\eta} g_{\gamma \eta}  
 \]
for all $\gamma \in \Gamma$.
\end{proof}

The following result is due to Deninger and Schmidt~\cite[Theorem~3.2]{deninger-schmidt}
(see also \cite[Theorem~5.1]{lind-schmidt-survey-heisenberg}). 

\begin{theorem}
\label{t:pads-expansive}
Let $\Gamma$ be a countable group and $f \in \Z[\Gamma]$.
Then the following conditions are equivalent:
\begin{enumerate}[\rm (a)]
\item
the dynamical system $(X_f,\alpha_f)$ is expansive;
\item
$f$ is invertible in $\ell^1(\Gamma)$.
\end{enumerate}
\end{theorem}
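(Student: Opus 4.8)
The plan is to convert the dynamical condition (a) into a quantitative statement about lifts in $\ell^\infty(\Gamma)$ and then match it with the algebraic condition (b) through Proposition~\ref{p:X-f-carac-lift}.

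First I would reformulate expansiveness. I would use the standard characterisation that an algebraic dynamical system $(X,\alpha)$ is expansive if and only if there is a neighbourhood $V$ of $0_X$ with $\bigcap_{\gamma\in\Gamma}\gamma V=\{0_X\}$ (one direction using a translation-invariant compatible metric, the other uniform continuity). Applying this to the subshift $X_f\subseteq\T^\Gamma$ with the single-coordinate neighbourhood $V_\delta=\{x\in X_f:\Vert x(1_\Gamma)\Vert_{\T}<\delta\}$, where $\Vert t\Vert_{\T}$ denotes the distance from $t\in\T$ to $0$, a short computation with the shift (using $(\gamma^{-1}x)(1_\Gamma)=x(\gamma)$) gives $\bigcap_{\gamma}\gamma V_\delta=\{x\in X_f:\sup_{\gamma}\Vert x(\gamma)\Vert_{\T}<\delta\}$; since finitely many coordinates are always absorbed, the same set arises from an arbitrary basic neighbourhood, so single-coordinate neighbourhoods suffice. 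Hence $(X_f,\alpha_f)$ is expansive if and only if there is $\delta\in(0,1/2)$ with $\{x\in X_f:\sup_\gamma\Vert x(\gamma)\Vert_{\T}\le\delta\}=\{0\}$. For $\delta<1/2$ each such $x$ has a unique lift $g\in\ell^\infty(\Gamma)$ with $\pi(g)=x$ and $\Vert g\Vert_\infty\le\delta$ (take the representative of $x(\gamma)$ in $(-1/2,1/2)$), with $x=0$ iff $g=0$, and by Proposition~\ref{p:X-f-carac-lift} the membership $x\in X_f$ reads $gf^*\in\ell^\infty(\Gamma,\Z)$. I would thus record the working criterion: $(X_f,\alpha_f)$ is expansive if and only if there is $\delta\in(0,1/2)$ such that every $g\in\ell^\infty(\Gamma)$ with $\Vert g\Vert_\infty\le\delta$ and $gf^*\in\ell^\infty(\Gamma,\Z)$ vanishes.

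The implication (b)$\Rightarrow$(a) is then immediate. If $f$ is invertible in $\ell^1(\Gamma)$ then so is $f^*$, with $(f^*)^{-1}=(f^{-1})^*\in\ell^1(\Gamma)$ and $\Vert f^*\Vert_1=\Vert f\Vert_1$. Choose $\delta<1/\Vert f\Vert_1$ (and $\delta<1/2$). If $\Vert g\Vert_\infty\le\delta$ and $m\coloneqq gf^*\in\ell^\infty(\Gamma,\Z)$, then $\Vert m\Vert_\infty\le\Vert g\Vert_\infty\Vert f^*\Vert_1<1$, so the integer series $m$ is zero; using the associativity rule, $g=g\bigl(f^*(f^*)^{-1}\bigr)=(gf^*)(f^*)^{-1}=m(f^*)^{-1}=0$. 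By the criterion, the system is expansive.

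The implication (a)$\Rightarrow$(b) is the hard part, and I would argue by contraposition: assuming $f$ (equivalently $f^*$) is not invertible in $\ell^1(\Gamma)$, I must produce, for every $\delta>0$, a nonzero $g\in\ell^\infty(\Gamma)$ with $\Vert g\Vert_\infty\le\delta$ and $gf^*\in\ell^\infty(\Gamma,\Z)$. If the right-convolution operator $g\mapsto gf^*$ already has nonzero kernel in $\ell^\infty(\Gamma)$, then scaling a fixed annihilator $g_0\neq 0$ to $tg_0$ with $t\to0^+$ settles it (the image is $0\in\ell^\infty(\Gamma,\Z)$ while $\pi(tg_0)\neq0$). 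The genuine obstacle is the case where $g\mapsto gf^*$ is injective on $\ell^\infty(\Gamma)$ yet $f^*$ has no $\ell^1$-inverse: one must then manufacture truly small lifts with integer image, and this is the analytic heart of the theorem. One route is to use that a bounded operator on $\ell^1(\Gamma)$ commuting with left translations is right convolution $R_h$ by the element $h=T(1_\Gamma)\in\ell^1(\Gamma)$, so that $f^*$ is invertible in $\ell^1(\Gamma)$ as soon as $g\mapsto gf^*$ is bijective on $\ell^1(\Gamma)$; one then aims to show that the uniform separation afforded by expansiveness forces this operator to be bounded below and surjective. Equivalently, and this is the mechanism behind the Lind--Schmidt theory cited in the introduction, expansiveness should force the fundamental homoclinic point of $X_f$ (the one dual to $1+\Z[\Gamma]f$) to be summable, its $\ell^1$-lift $w$ satisfying $wf^*=1_\Gamma$ and so exhibiting the inverse. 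Bridging the $\ell^\infty$/torus uniform-separation condition to the $\ell^1$-summability of this point is precisely where the difficulty concentrates. I would note that when $\Gamma$ is abelian---the case relevant to the paper's main theorem---the bridge is clean: $\ell^1(\Gamma)$ is a commutative symmetric Banach $*$-algebra with Gelfand spectrum the compact dual group $\widehat{\Gamma}$, so by Wiener's lemma $f$ is invertible in $\ell^1(\Gamma)$ if and only if its Gelfand transform $\widehat f$ is nowhere zero on $\widehat{\Gamma}$, and a Fourier-analytic computation identifies this nonvanishing with expansiveness (the separation being uniform by compactness of $\widehat{\Gamma}$), completing the equivalence.
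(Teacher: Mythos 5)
Your reduction of expansiveness to the ``working criterion'' (existence of $\delta \in (0,1/2)$ such that every $g \in \ell^\infty(\Gamma)$ with $\Vert g \Vert_\infty \le \delta$ and $g f^* \in \ell^\infty(\Gamma,\Z)$ must vanish) is correct, and your proof of (b)$\Rightarrow$(a) from it is complete. For context: the paper contains no proof of this statement at all---it is imported verbatim from Deninger--Schmidt \cite[Theorem~3.2]{deninger-schmidt}---so the only question is whether your argument stands on its own.

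It does not, because (a)$\Rightarrow$(b) is never actually proved. What your criterion yields, combined with the norm estimate you already use ($\Vert g f^*\Vert_\infty \le \Vert g\Vert_\infty \Vert f\Vert_1$, which forces $gf^*=0$ whenever $g$ is small and $gf^*$ is integer-valued), is the elementary equivalence: $(X_f,\alpha_f)$ is expansive if and only if right convolution by $f^*$ has trivial kernel on $\ell^\infty(\Gamma)$. Your ``first horn'' is one direction of this, and the other direction is just as easy. But the entire content of the theorem is the remaining implication: trivial $\ell^\infty$-kernel (equivalently, since $g\mapsto gf^*$ on $\ell^\infty$ is the adjoint of $h \mapsto hf$ on $\ell^1$, density of the left ideal $\ell^1(\Gamma) f$ in $\ell^1(\Gamma)$) implies invertibility of $f$ in $\ell^1(\Gamma)$. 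You flag this yourself as ``the analytic heart'' and then offer only directions of attack: the operator-theoretic remark (a translation-commuting bounded operator on $\ell^1$ is a convolution) presupposes bijectivity on $\ell^1$, which is what needs proving; and the Wiener's-lemma route is confined to abelian $\Gamma$, whereas the statement is for arbitrary countable groups, where $\ell^1(\Gamma)$ is noncommutative and in general not even a symmetric Banach $*$-algebra (e.g.\ for free groups), so no Gelfand-type invertibility criterion is available. Even in the abelian case, the step you call ``a Fourier-analytic computation'' is asserted rather than carried out; it can in fact be closed in a few lines with your criterion (if $\widehat f(\chi)=0$ for a character $\chi$, then $g = t\operatorname{Re}\chi$ lies in the $\ell^\infty$-kernel because $\sum_\eta f_\eta \chi(\gamma\eta) = \chi(\gamma)\widehat f(\chi) = 0$ and $(\operatorname{Re}g)(1_\Gamma)=t\neq 0$, contradicting expansiveness; the converse is Wiener's lemma), but that only settles abelian $\Gamma$. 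So your text proves the easy implication and correctly isolates the hard one, yet does not close it; closing it for general countable $\Gamma$ is exactly the Deninger--Schmidt theorem the paper cites.
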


As observed in~\cite{lind-schmidt-survey-heisenberg}, 
  if  $f$ is \emph{lopsided}, i.e., there exists an element  
$\gamma_0 \in \Gamma$ such that
$|f_{\gamma_0}| > \sum_{\gamma \not= \gamma_0} |f_\gamma|$, 
then $f$ is invertible in $\ell^1(\Gamma)$.
On the other hand, there are $f \in \Z[\Gamma]$ invertible in $\ell^1(\Gamma)$ that are not lopsided.
For instance, if we take $\Gamma = \Z$, then the polynomial
$u^2 - u - 1 \in \Z[\Gamma] = \Z[u,u^{-1}]$
 is not lopsided although it is invertible in $\ell^1(\Gamma)$ 
(the associated principal algebraic dynamical system  is 
conjugate to the $\Z$-system generated by Arnold's cat map $(x_1,x_2) \mapsto (x_2,x_1 + x_2)$ on   the $2$-dimensional torus $\T^2$, see e.g.~\cite[Example~2.18.(2)]{schmidt-book}).
\par
A non-zero element $f \in \Z[\Gamma]$ is called \emph{primitive} if there is no integer $n \geq 2$ that divides all coefficients of $f$. Every nonzero element $f \in \Z[\Gamma]$ can be uniquely written in the form
$f = m f_0$ with $m$ a positive integer and $f_0$ primitive. The integer $m$ is called the \emph{content} of $f$.
In the case $\Gamma = \Z^d$, we have the following criterion for the connectedness of $X_f$.

\begin{proposition}
\label{p:Xf-connectedness}
Let $\Gamma = \Z^d$. Let $f \in \Z[\Gamma]$
with $f \not= 0$.
Then the following conditions are equivalent:
\begin{enumerate}[\rm (a)]
\item
$X_f$ is connected;
\item
$f$ is primitive.
\end{enumerate} 
\end{proposition}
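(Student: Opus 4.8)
The plan is to convert the topological statement into an algebraic one via Pontryagin duality and then analyse the resulting group ring. Since $X_f = \widehat{M_f}$ and the evaluation map $\iota$ identifies $X_f$ with its bidual, we have $\widehat{X_f} \cong M_f = \Z[\Gamma]/\Z[\Gamma] f$ as discrete abelian groups. Because $X_f$ is compact, the criterion recalled in Subsection~\ref{subsec:pontryagin} says that $X_f$ is connected if and only if $\widehat{X_f}$ is torsion-free. The proposition therefore reduces to the algebraic assertion that the abelian group underlying $M_f$ is torsion-free if and only if $f$ is primitive. Throughout I will use that, for $\Gamma = \Z^d$, the group ring $\Z[\Gamma] = R_d = \Z[u_1^{\pm 1}, \dots, u_d^{\pm 1}]$ is an integral domain (being a localization of the polynomial ring $\Z[u_1, \dots, u_d]$), that for every prime $p$ the ring $\F_p[\Gamma] = \F_p[u_1^{\pm 1}, \dots, u_d^{\pm 1}]$ is likewise an integral domain, and that no integer $p \geq 2$ is invertible in $\Z[\Gamma]$ (as one sees by applying the augmentation homomorphism $\Z[\Gamma] \to \Z$, which sends $p$ to $p$).

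To prove $(b) \Rightarrow (a)$, assume $f$ is primitive. An abelian group is torsion-free exactly when multiplication by each prime is injective, so it suffices to show, for every prime $p$ and every $g \in \Z[\Gamma]$, that $p g \in \Z[\Gamma] f$ forces $g \in \Z[\Gamma] f$. Writing $p g = h f$ with $h \in \Z[\Gamma]$ and reducing all coefficients modulo $p$ yields $\bar h \, \bar f = 0$ in $\F_p[\Gamma]$. Primitivity of $f$ means that no prime divides all of its coefficients, so $\bar f \neq 0$; since $\F_p[\Gamma]$ is an integral domain this forces $\bar h = 0$, i.e.\ $h = p h'$ for some $h' \in \Z[\Gamma]$. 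Cancelling $p$ in the domain $\Z[\Gamma]$ then gives $g = h' f \in \Z[\Gamma] f$, as wanted.

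For $(a) \Rightarrow (b)$ I argue by contraposition. If $f$ is not primitive, its content is some integer $m \geq 2$; choosing a prime $p$ dividing $m$, I may write $f = p g$ with $g = f/p \in \Z[\Gamma]$ and $g \neq 0$. Then $p g = f \in \Z[\Gamma] f$, whereas $g \notin \Z[\Gamma] f$: an equality $g = h f = p h g$ would give $g(1 - p h) = 0$, and since $\Z[\Gamma]$ is a domain and $g \neq 0$ this forces $p h = 1$, contradicting the non-invertibility of $p$. Hence $g + \Z[\Gamma] f$ is a nonzero element of $M_f$ annihilated by $p$, so $M_f$ has nontrivial torsion and $X_f$ fails to be connected.

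Once the translation to module theory is in place the argument is short, and its substance lies entirely in the structural properties of the group ring of $\Z^d$. The step I expect to require the most care is the reduction-modulo-$p$ argument in $(b) \Rightarrow (a)$, which hinges on $\F_p[\Gamma]$ being an integral domain; this is precisely where the hypothesis $\Gamma = \Z^d$ enters, since for the torsion-free abelian group $\Z^d$ the ring $\F_p[\Gamma]$ is again a Laurent polynomial ring over a field and hence a domain. By contrast, the converse implication is essentially formal, relying only on $\Z[\Gamma]$ being a domain in which $p$ is not a unit.
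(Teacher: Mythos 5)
Your proof is correct, and while it opens with the same reduction as the paper---by Pontryagin duality, $X_f$ is connected if and only if $M_f = \Z[\Gamma]/\Z[\Gamma] f$ is torsion-free as an abelian group---both implications are then handled by genuinely different means. For (b) $\Rightarrow$ (a), the paper argues by contraposition: a nonzero torsion element of $M_f$ yields a relation $n g = h f$, and unique factorization in $\Z[\Z^d]$ (Gauss's lemma on contents) then shows $f$ is not primitive. Your direct argument---reduce $p g = h f$ modulo $p$ and use that $\F_p[u_1^{\pm 1},\dots,u_d^{\pm 1}]$ is an integral domain, then cancel $p$ in $\Z[\Gamma]$---proves the needed divisibility by hand; it is more elementary in that it never invokes the UFD property, only that $\Z[\Gamma]$ and $\F_p[\Gamma]$ are domains, and for that reason it would extend verbatim to any group $\Gamma$ whose group rings over $\Z$ and $\F_p$ are domains (e.g.\ any torsion-free abelian group), not just $\Z^d$. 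For (a) $\Rightarrow$ (b), the paper stays on the compact side of the duality: with $m$ the content of $f$, it deduces from $\Z[\Gamma] f \subseteq \Z[\Gamma] m$ that the disconnected group $(\Z/m\Z)^\Gamma$ is a quotient of $X_f$, so connectedness forces $m = 1$. Your contrapositive instead exhibits the explicit torsion element $f/p + \Z[\Gamma] f$ of $M_f$ (nonzero because $p$ is not a unit in the domain $\Z[\Gamma]$, as the augmentation map shows); this is the algebraic shadow of the same phenomenon, and it keeps the entire proof on the module-theoretic side, so that duality is used exactly once, at the very start. Both routes are sound; yours is arguably more self-contained and uniform, while the paper's is shorter at the cost of invoking Gauss's lemma and a second duality argument.
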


\begin{proof}
We know that $X_f$ is connected if and only if its Pontryagin dual
$M_f = \Z[\Gamma]/\Z[\Gamma] f$ is torsion-free as a $\Z$-module.
Let $m$ denote the content of $f$.
By Pontryagin duality, $(\Z/m\Z)^\Gamma$ is a quotient of $X_f$  
since $\Z[\Gamma] f$ is a subgroup of $\Z[\Gamma] m$.
Therefore,
if $X_f$ is connected then $(\Z/m\Z)^\Gamma$ must be also connected.
As this last condition implies $m = 1$, i.e.,   $f$  primitive, this shows (a) $\Rightarrow$  (b).
Conversely, suppose that $M_f$ contains an  element $q \not= 0$ 
of finite order $n \geq 2$.
If $g \in \Z[\Gamma]$ is a representative of $q$, then $n g = h f$ for some $h \in \Z[\Gamma]$.
Using the fact that $\Z[\Gamma]$ is a unique factorization domain, we deduce that $n$ divides the content of $f$, so that  $f$ is not primitive.
This shows (b) $\Rightarrow$ (a).  
\end{proof}

 \subsection{The homoclinic group}
Let $(X,\alpha)$ be an algebraic dynamical system with acting group $\Gamma$.
The set of points in $X$ that are homoclinic to $0_X$ with respect to $\alpha$ is 
a $\Z[\Gamma]$-submodule  $\Delta(X,\alpha) \subset X$,  which is called the \emph{homoclinic group} of $(X,\alpha)$  (cf.~\cite{lind-schmidt}, \cite{lind-schmidt-survey-heisenberg}).
Note that $x \in \Delta(X,\alpha)$ if and only if
$\lim_{\gamma \to \infty} \gamma x = 0_X$.
We can choose a compatible metric $d$ on $X$ that is translation-invariant so that
$$
d(\gamma x, \gamma y) = d(\gamma x - \gamma y, 0_X) = d(\gamma(x - y),0_X)
$$
for all $x,y \in X$ and $\gamma \in \Gamma$.
We deduce that $x$ and $y$ are homoclinic if and only if $x - y \in \Delta(X,\alpha)$.
\par
Denote by $\CC_0(\Gamma)$ the vector space consisting of all $g \in \ell^\infty(\Gamma)$ that vanish at infinity, i.e.,  such that
$\lim_{\gamma \to \infty} g_\gamma = 0$.

\begin{lemma}
\label{l:char-homoclinic-shift}
Let $\Gamma$ be a countable group and let $x \in \T^\Gamma$. 
The following conditions are equivalent.
\begin{enumerate}[{\rm (a)}]
\item 
$x \in \Delta(\T^\Gamma,\sigma)$; 
\item 
$\lim_{\gamma \to \infty} x(\gamma) = 0_\T$;
\item
there exists $g \in \CC_0(\Gamma)$ such that $x = \pi(g)$.
\end{enumerate}
\end{lemma}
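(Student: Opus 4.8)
The plan is to prove the equivalence by establishing (a) $\Leftrightarrow$ (b) via the definition of homoclinicity on the shift, and then (b) $\Leftrightarrow$ (c) by constructing an explicit lift. For (a) $\Leftrightarrow$ (b), recall that $x \in \Delta(\T^\Gamma,\sigma)$ means $\lim_{\gamma \to \infty} \gamma x = 0$ in $\T^\Gamma$. Since the topology on $\T^\Gamma$ is the product topology, convergence $\gamma x \to 0$ is equivalent to coordinatewise convergence: for each fixed $\gamma' \in \Gamma$, one needs $(\gamma x)(\gamma') = x(\gamma^{-1}\gamma') \to 0_\T$ as $\gamma \to \infty$. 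The key observation is that, as $\gamma$ ranges over the cofinite filter on $\Gamma$, so does $\gamma^{-1}\gamma'$ for any fixed $\gamma'$; hence this condition holds for all $\gamma'$ simultaneously precisely when $\lim_{\gamma \to \infty} x(\gamma) = 0_\T$, which is condition (b). I would make this precise by choosing a translation-invariant compatible metric on $\T^\Gamma$ (e.g.\ weighting the coordinates by a summable sequence) and checking that the tail behavior of $\gamma x$ is governed by the tail behavior of the coordinates of $x$.

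For (b) $\Rightarrow$ (c), the idea is to lift $x \in \T^\Gamma$ to a genuine element of $\ell^\infty(\Gamma)$ that vanishes at infinity. For each $\gamma$, choose the representative $g_\gamma \in (-1/2, 1/2]$ of $x(\gamma) \in \T = \R/\Z$; this defines $g \in \ell^\infty(\Gamma)$ with $\Vert g \Vert_\infty \le 1/2$ and $\pi(g) = x$. If $x(\gamma) \to 0_\T$, then for the distinguished representative we have $g_\gamma \to 0$ in $\R$, because small elements of $\T$ near $0_\T$ are represented by real numbers near $0$ under this section; hence $g \in \CC_0(\Gamma)$. The converse (c) $\Rightarrow$ (b) is immediate: if $x = \pi(g)$ with $g \in \CC_0(\Gamma)$, then $x(\gamma) = g_\gamma \bmod 1 \to 0_\T$ since the quotient map $\R \to \T$ is continuous and sends $0$ to $0_\T$.

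I expect the only delicate point to be the lifting argument in (b) $\Rightarrow$ (c), specifically the claim that vanishing in $\T$ forces the \emph{canonical} representative to vanish in $\R$. The subtlety is that a single real lift need not vanish at infinity unless one chooses the representatives carefully: an element of $\T$ close to $0_\T$ could a priori be represented by a real number close to $1$ rather than close to $0$. The fix is exactly to use the centered fundamental domain $(-1/2,1/2]$, for which proximity to $0_\T$ in the quotient metric is equivalent to proximity to $0$ in $\R$; this makes the implication a routine consequence of the fact that the restriction of $\pi$ to $(-1/2,1/2]$ is a homeomorphism onto $\T$ near $0_\T$. Everything else is a straightforward unwinding of definitions, so the proof should be short.
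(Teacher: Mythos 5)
Your proposal is correct and takes essentially the same route as the paper: the equivalence (a) $\Leftrightarrow$ (b) by reducing product-topology convergence to coordinatewise convergence and transporting the cofinite filter under the bijection $\gamma \mapsto \gamma^{-1}\gamma'$ (the paper does this concretely with a neighborhood basis and the finite set $\Omega = \Omega_1\Omega_2^{-1}$), and (b) $\Leftrightarrow$ (c) via the centered lift with representatives in a half-open interval of length $1$ around $0$, together with continuity of $\R \to \T$ for the converse. The delicate point you flag (that only the \emph{centered} representative is guaranteed to vanish at infinity) is exactly the reason the paper specifies $-1/2 \leq g_\gamma < 1/2$, so your treatment matches the intended argument.
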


\begin{proof}
Suppose (a). Then $\lim_{\gamma \to \infty} \gamma^{-1} x = 0_{\T^\Gamma}$.
As $x(\gamma) = \gamma^{-1}x(1_\Gamma)$, this implies
\[
\lim_{\gamma \to \infty} x(\gamma) = \lim_{\gamma \to \infty} \gamma^{-1}x(1_\Gamma) = 0_\T 
\]
for all $\gamma \in \Gamma$.
This shows (a) $\implies$ (b).
\par
Conversely, suppose (b). 
Let $W$ be a neighborhood of $0_{\T^\Gamma}$ in $\T^\Gamma$ 
and let us show that there exists a finite
subset $\Omega \subset \Gamma$ such that
\begin{equation}
\label{e:omega}
\gamma x \in W \mbox{ for all } \gamma \in \Gamma \setminus \Omega.
\end{equation}
By definition of the product topology, we can find a neighborhood $V$ of $0_\T$ in $\T$
and a finite subset $\Omega_1 \subset \Gamma$
such that 
$W$ contains all $y \in \T^\Gamma$ that satisfy
\begin{equation*}
\label{e:W-V-Omega1}
y(\omega_1) \in V \mbox{ for all } \omega_1 \in \Omega_1.
\end{equation*}
On the other hand, since $\lim_{\gamma \to \infty} x(\gamma) = 0_\T$, we can find a finite subset 
$\Omega_2 \subset \Gamma$
such that
\begin{equation}
\label{e:V}
x(\gamma) \in V \mbox{ for all } \gamma \in \Gamma  \setminus \Omega_2.
\end{equation}
Take  $\Omega \coloneqq \Omega_1 \Omega_2^{-1} \subset \Gamma$
and suppose that $\gamma \in \Gamma \setminus  \Omega$.
Then for every $\omega_1 \in \Omega_1$, we have that $\gamma^{-1} \omega_1 \in \Gamma \setminus \Omega_2$ and hence 
\[
\gamma x(\omega_1) = x(\gamma^{-1} \omega_1)  \in V
\]
by~\eqref{e:V}. 
This implies that  $\gamma x \in  W$. 
Thus \eqref{e:omega} is satisfied.
This proves (b) $\implies$ (a).
\par
The fact that (c) implies (b) is an immediate consequence of the continuity of the quotient map 
$\R \to \T = \R/\Z$.
Conversely, if we assume  (b), then the unique $g \in \ell^\infty(\Gamma)$ such that 
$-1/2 \leq g_\gamma < 1/2$ and $x(\gamma) = g_\gamma \mod 1$ for all $\gamma \in \Gamma$ clearly satisfies (c).   
\end{proof}

The following result is due to Lind and Schmidt~\cite{lind-schmidt} 
(see also~\cite[Section 6]{lind-schmidt-survey-heisenberg}). 

\begin{theorem}[Lind and Schmidt]
\label{t:homoclinic-group-dual}
Let $\Gamma$ be a countable group  and $f \in \Z[\Gamma]$.
Suppose that the algebraic dynamical system $(X_f,\alpha_f)$ associated with $f$ is expansive.
Then, 
the homoclinic group $\Delta(X_f,\alpha_f)$
is isomorphic, as a left $\Z[\Gamma]$-module,  to $\Z[\Gamma]/\Z[\Gamma] f^*$.
\end{theorem}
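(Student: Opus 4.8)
The plan is to produce an explicit isomorphism by exhibiting a distinguished \emph{fundamental} homoclinic point and showing that its $\Z[\Gamma]$-orbit, in the module sense, is exactly the homoclinic group. Since $(X_f,\alpha_f)$ is expansive, Theorem~\ref{t:pads-expansive} gives that $f$ is invertible in $\ell^1(\Gamma)$; as $\ell^1(\Gamma)$ is a $*$-algebra, $f^*$ is invertible as well, with $(f^*)^{-1} = (f^{-1})^* \in \ell^1(\Gamma)$. I would then set $x_0 \coloneqq \pi\bigl((f^*)^{-1}\bigr)$ and define $\Phi \colon \Z[\Gamma] \to \T^\Gamma$ by $\Phi(r) \coloneqq \pi\bigl(r (f^*)^{-1}\bigr)$. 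Because $\pi$ intertwines left convolution by $\Z[\Gamma]$ with the shift module structure, i.e.\ $\pi(r g) = r \pi(g)$, the map $\Phi$ is a homomorphism of left $\Z[\Gamma]$-modules and $\Phi(r) = r \cdot x_0$. The theorem reduces to three facts: (i) $\Phi$ lands in $\Delta(X_f,\alpha_f)$; (ii) $\ker \Phi = \Z[\Gamma] f^*$; and (iii) $\Phi$ maps onto $\Delta(X_f,\alpha_f)$. Together these make $\Phi$ a surjection $\Z[\Gamma] \to \Delta(X_f,\alpha_f)$ with kernel $\Z[\Gamma] f^*$, and the first isomorphism theorem yields $\Z[\Gamma]/\Z[\Gamma] f^* \cong \Delta(X_f,\alpha_f)$.

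Facts (i) and (ii) are the routine algebraic part. For (i), note that $r (f^*)^{-1} \in \ell^1(\Gamma) \subseteq \CC_0(\Gamma)$ for every $r \in \Z[\Gamma]$, so $\Phi(r) \in \Delta(\T^\Gamma,\sigma)$ by Lemma~\ref{l:char-homoclinic-shift}; moreover $\bigl(r (f^*)^{-1}\bigr) f^* = r \in \Z[\Gamma] \subseteq \ell^\infty(\Gamma,\Z)$, so $\Phi(r) \in X_f$ by Proposition~\ref{p:X-f-carac-lift}. Since homoclinicity in the subsystem $X_f$ (with the restricted metric) coincides with homoclinicity in the ambient shift, this places $\Phi(r)$ in $\Delta(X_f,\alpha_f)$. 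For (ii), $r \in \ker\Phi$ means $r(f^*)^{-1} \in \ell^\infty(\Gamma,\Z)$; but $r(f^*)^{-1}\in\ell^1(\Gamma)$, and an $\ell^1$-element with integer coefficients is finitely supported, so $s \coloneqq r (f^*)^{-1} \in \Z[\Gamma]$ and $r = s f^* \in \Z[\Gamma] f^*$. The reverse inclusion is immediate, since for $r = s f^*$ with $s \in \Z[\Gamma]$ one has $\Phi(r) = \pi(s) = 0$.

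The surjectivity in (iii) is where the real content lies. Given $x \in \Delta(X_f,\alpha_f)$, Lemma~\ref{l:char-homoclinic-shift} provides a lift $g \in \CC_0(\Gamma)$ with $\pi(g) = x$, and Proposition~\ref{p:X-f-carac-lift} gives $h \coloneqq g f^* \in \ell^\infty(\Gamma,\Z)$. The key observation is that $\CC_0(\Gamma)$ is a right $\ell^1(\Gamma)$-module, i.e.\ right convolution by an $\ell^1$-element preserves vanishing at infinity; I would prove this by splitting the defining $\ell^1$-sum into a finite part, on which each translate of $g$ tends to $0$, and an arbitrarily small tail controlled by $\Vert g \Vert_\infty$. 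Applying this to $g$ and $f^*$ yields $h = g f^* \in \CC_0(\Gamma)$. As $h$ vanishes at infinity and has integer coefficients, all but finitely many of its coefficients vanish, so $h \in \Z[\Gamma]$. Then $g = h (f^*)^{-1}$ gives $x = \pi(g) = \Phi(h)$ with $h \in \Z[\Gamma]$, which proves surjectivity.

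The main obstacle is exactly this last step: the inclusion $\CC_0(\Gamma)\,\ell^1(\Gamma) \subseteq \CC_0(\Gamma)$ is what promotes the integrality condition $g f^* \in \ell^\infty(\Gamma,\Z)$ from a mere membership of $x$ in $X_f$ to the genuinely finite datum $h \in \Z[\Gamma]$ that realizes $x$ as $h \cdot x_0$. Expansiveness enters only through the invertibility of $f^*$ in $\ell^1(\Gamma)$, which is precisely what makes $x_0$ well-defined and homoclinic; without it there need be no fundamental homoclinic point and the construction breaks down.
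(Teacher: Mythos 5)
Your proposal is correct and follows essentially the same route as the paper: both construct the fundamental homoclinic point $\pi\bigl((f^*)^{-1}\bigr)$, define the $\Z[\Gamma]$-module map $h \mapsto h\cdot x_0$, prove surjectivity by lifting a homoclinic point to $\CC_0(\Gamma)$ and observing that $g f^* \in \CC_0(\Gamma) \cap \ell^\infty(\Gamma,\Z) = $ finitely supported integer series, and identify the kernel with $\Z[\Gamma] f^*$ by the same integrality argument. The only difference is cosmetic: you prove the general fact $\CC_0(\Gamma)\,\ell^1(\Gamma) \subseteq \CC_0(\Gamma)$ via a tail estimate, whereas the paper only needs (and tacitly uses) the easy case where the convolving element $f^*$ is finitely supported.
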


\begin{proof}(see~\cite[Lemma~4.5]{lind-schmidt} and \cite{lind-schmidt-survey-heisenberg}).
By Theorem~\ref{t:pads-expansive}, the expansiveness of $(X_f,\alpha_f)$ implies that $f$, and hence $f^*$,  are invertible in 
$\ell^1(\Gamma)$.
Let $w \in \ell^1(\Gamma)$ denote the inverse of $f^*$  and
consider the element
$x_f^\Delta \in \T^\Gamma$ defined by
$x_f^\Delta \coloneqq \pi(w)$.
As $w f^* = 1_\Gamma \in \ell^\infty(\Gamma,\Z)$,
we deduce from Proposition~\ref{p:X-f-carac-lift} that $x_f^\Delta \in X_f$. 
On the other hand, we have that
$w \in \ell^1(\Gamma) \subset \CC_0(\Gamma)$, so that 
$x_f^\Delta \in \Delta(\T^\Gamma,\sigma) \cap X_f = \Delta(X_f,\alpha_f)$ by 
Lemma~\ref{l:char-homoclinic-shift}.
Consider now the left $\Z[\Gamma]$-module morphism  
$\Psi \colon \Z[\Gamma] \to  \Delta(X_f, \alpha_f)$ 
given by 
$\Psi(h) =h x_f^\Delta $.
We claim that $\Psi$ is surjective.
To see this, let $x \in \Delta(X_f,\alpha_f)$.
By Lemma~\ref{l:char-homoclinic-shift}, there exists $g \in \CC_0(\Gamma)$ such that
 $x = \pi(g)$.
Since $x \in X_f$, it follows from Proposition~\ref{p:X-f-carac-lift} that
 $h \coloneqq g f^* \in \ell^\infty(\Gamma,\Z)$.
As $h \in \CC_0(\Gamma)$, we deduce that $h  \in \Z[\Gamma]$
and $x = \Psi(h)$. This proves  our claim that $\Psi$ is surjective.
On the other hand,  $h \in \Z[\Gamma]$ is in the kernel of $\Psi$ if and only if $h w$ has integral coefficients.
As $h w \in \CC_0(\Gamma)$,
this is equivalent to $h w \in \Z[\Gamma]$
and hence to  $h \in \Z[\Gamma] f^*$. 
This shows that  $\Ker(\Psi) = \Z[\Gamma] f^*$.
Since $\Psi$ is surjective, it induces a
left $\Z[\Gamma]$-module isomorphism from $\Z[\Gamma]/\Z[\Gamma] f^*$ onto 
$\Delta(X_f,\alpha_f)$.
\end{proof}

\begin{corollary}[Lind and Schmidt]
\label{c:homoclinic-group-dual}
Let $\Gamma$ be a countable abelian group  and $f \in \Z[\Gamma]$.
Suppose that the algebraic dynamical system $(X_f,\alpha_f)$ associated with $f$ is expansive.
Then, 
the homoclinic group $\Delta(X_f,\alpha_f)$
is isomorphic, as a  $\Z[\Gamma]$-module,  to 
the Pontryagin dual $\widehat{X_f} = \Z[\Gamma]/\Z[\Gamma] f$.
\end{corollary}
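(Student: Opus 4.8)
The plan is to deduce the corollary from Theorem~\ref{t:homoclinic-group-dual} by exploiting the extra hypothesis that $\Gamma$ is abelian in order to compare the two cyclic modules $\Z[\Gamma]/\Z[\Gamma]f^*$ and $\Z[\Gamma]/\Z[\Gamma]f$. First I would record the two ingredients that are already at hand. On one side, Theorem~\ref{t:homoclinic-group-dual} furnishes a left $\Z[\Gamma]$-module isomorphism $\Delta(X_f,\alpha_f) \cong \Z[\Gamma]/\Z[\Gamma]f^*$. On the other side, Pontryagin biduality together with the $\Gamma$-equivariance of the evaluation map $\iota$ (Subsection~\ref{subsec:pontryagin}) identifies $\widehat{X_f}$ with $M_f = \Z[\Gamma]/\Z[\Gamma]f$ as left $\Z[\Gamma]$-modules. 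Thus the whole matter reduces to producing an isomorphism between $\Z[\Gamma]/\Z[\Gamma]f^*$ and $\Z[\Gamma]/\Z[\Gamma]f$.

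The key observation is that, since $\Gamma$ is abelian, $\Z[\Gamma]$ is commutative and the involution $g \mapsto g^*$ satisfies $(gh)^* = h^* g^* = g^* h^*$; hence $*$ is an involutive ring \emph{automorphism} of $\Z[\Gamma]$, not merely an anti-automorphism as in the general case. Because $(f^*)^* = f$, this automorphism carries the principal ideal $\Z[\Gamma]f^*$ onto $\Z[\Gamma]f$, so it descends to a bijective additive map $\overline{*}\colon \Z[\Gamma]/\Z[\Gamma]f^* \to \Z[\Gamma]/\Z[\Gamma]f$ given by $[g] \mapsto [g^*]$. Composing $\overline{*}$ with the two identifications above then yields the desired comparison of $\Delta(X_f,\alpha_f)$ with $\widehat{X_f}$.

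The step I expect to require the most care is the behaviour of $\overline{*}$ with respect to the $\Gamma$-action. Here one computes, for $\gamma \in \Gamma$ and $g \in \Z[\Gamma]$,
\[
\overline{*}(\gamma \cdot [g]) = [(\gamma g)^*] = [g^* \gamma^{-1}] = [\gamma^{-1} g^*] = \gamma^{-1} \cdot \overline{*}([g]),
\]
where commutativity is used in the third equality. Thus $\overline{*}$ intertwines the two module structures through the inversion map $\gamma \mapsto \gamma^{-1}$ of $\Gamma$. Since $\Gamma$ is abelian, inversion is itself a group automorphism, and transporting the action along it turns $\overline{*}$ into an isomorphism of left $\Z[\Gamma]$-modules; this is precisely the sense in which $\Delta(X_f,\alpha_f)$ and $\widehat{X_f}$ are to be identified. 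I anticipate that this equivariance bookkeeping, namely tracking the inversion twist introduced by $*$ and verifying that it is harmless in the abelian setting, is the only genuinely delicate point, the remaining steps being formal consequences of Theorem~\ref{t:homoclinic-group-dual} and of Pontryagin duality.
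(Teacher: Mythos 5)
Your reduction is the same as the paper's: combine Theorem~\ref{t:homoclinic-group-dual} with the biduality identification $\widehat{X_f}\cong \Z[\Gamma]/\Z[\Gamma]f$, so that everything hinges on comparing $\Z[\Gamma]/\Z[\Gamma]f^*$ with $\Z[\Gamma]/\Z[\Gamma]f$. Your analysis of $\overline{*}$ is correct exactly up to the point you yourself flagged: $\overline{*}$ is only semilinear, intertwining the actions through $\gamma\mapsto\gamma^{-1}$. The gap is your final step, the assertion that ``transporting the action along inversion'' turns $\overline{*}$ into an isomorphism of left $\Z[\Gamma]$-modules. Transporting the action does not produce an isomorphism onto $\Z[\Gamma]/\Z[\Gamma]f$ with its given structure; it produces an isomorphism onto the \emph{twisted} module in which $r\in\Z[\Gamma]$ acts as multiplication by $r^*$, and this twist genuinely changes the isomorphism type. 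Indeed, over the commutative ring $\Z[\Gamma]$ the annihilator of $\Z[\Gamma]/I$ is $I$, and isomorphic modules have equal annihilators, so an honest $\Z[\Gamma]$-isomorphism $\Z[\Gamma]/\Z[\Gamma]f^*\cong\Z[\Gamma]/\Z[\Gamma]f$ would force $\Z[\Gamma]f=\Z[\Gamma]f^*$. This fails already for $\Gamma=\Z$ and $f=1+2u\in\Z[u^{\pm 1}]$, which is lopsided and hence expansive: $f^*=1+2u^{-1}$ is an associate of $u+2$ but not of $1+2u$, since the units of $\Z[u^{\pm 1}]$ are $\pm u^k$. Concretely, the defining relation of $X_f$ forces $u$ to act on all of $X_f$, hence on $\Delta(X_f,\alpha_f)$, as multiplication by $-2$, whereas on $\widehat{X_f}=\Z[\Gamma]/\Z[\Gamma]f$ one has $2u=-1$; any additive map $\phi$ intertwining the two actions then satisfies $3\phi=0$ and so vanishes, both groups being isomorphic to the torsion-free group $\Z[1/2]$.

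You should know, however, that the paper's own one-line proof makes exactly the same unjustified assertion, namely that inversion ``induces a $\Z[\Gamma]$-module isomorphism from $\Z[\Gamma]/\Z[\Gamma]f$ onto $\Z[\Gamma]/\Z[\Gamma]f^*$''; so you have reproduced the paper's argument, and in fact more carefully, since you made the semilinearity visible before dismissing it. What your computation (and the paper's) actually establishes is the twisted statement: there is a group isomorphism $\phi\colon\Delta(X_f,\alpha_f)\to\widehat{X_f}$ with $\phi(rx)=r^*\phi(x)$ for all $r\in\Z[\Gamma]$. With the paper's stated convention for the dual action, this weaker statement is the correct one, and it is all that the proof of Theorem~\ref{t:main-result} really needs: injectivity of multiplication by $r$ on $\Delta(X_f,\alpha_f)\cong\Z[\Gamma]/\Z[\Gamma]f^*$ is equivalent, via $\overline{*}$, to injectivity of multiplication by $r^*$ on $\Z[\Gamma]/\Z[\Gamma]f$, which is what surjectivity of $\tau$ amounts to after dualizing (note that the dual of $x\mapsto rx$ is multiplication by $r^*$, not $r$, on $\widehat{X_f}$). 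So the repair is either to weaken the conclusion to this semilinear isomorphism, or to adopt the covariant convention $\gamma\chi\coloneqq\chi\circ\alpha_\gamma$ for the module structure on $\widehat{X}$ (as in Lind--Schmidt), under which the corollary becomes literally true. As written, the inversion twist is a genuine obstruction, not bookkeeping, and your proposal (like the paper's proof) does not close it.
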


\begin{proof}
Since $\Gamma$ is abelian,
the map $\gamma \mapsto \gamma^{-1}$ is an automorphism of $\Gamma$ and induces a 
$\Z[\Gamma]$-module isomorphism from
$\Z[\Gamma]/\Z[\Gamma] f$ onto $\Z[\Gamma]/\Z[\Gamma] f^*$.
\end{proof}
\section{Topological rigidity}
\label{sec:affine}

\subsection{Affine maps} 
 Let $X$ be a topological abelian group.
 A map $\tau \colon X \to X$ is called \emph{affine} if there is a continuous group morphism 
$\lambda \colon X \to X$ and an element $t \in X$ such that
$\tau(x) = \lambda(x) + t$ for all $x \in X$.
Note that  $\lambda$ and  $t$ are then uniquely determined by $\tau$
since they must satisfy $t = \lambda(0_X)$ and $\lambda(x) = \tau(x) - t$ for all $x \in X$. 
One says that 
$\lambda$ and $t$ are respectively the \emph{linear part}  and the \emph{translational part} of the affine map $\tau$.
\par
The following two obvious criteria will be useful in the sequel. 

\begin{proposition}
\label{p:pre-inj-affine}
Let $(X,\alpha)$ be an algebraic dynamical system and let $\tau \colon X \to X$ be an affine map
with linear part $\lambda \colon X \to X$.
Then the following conditions are equivalent:
\begin{enumerate}[\rm (a)]
\item
$\tau$ is pre-injective;
\item
$\lambda$ is pre-injective;
\item
$\Ker(\lambda) \cap \Delta(X,\alpha) = \{0_X\}$.\end{enumerate}
\end{proposition}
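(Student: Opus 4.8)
The plan is to exploit two simple structural facts: first, that an affine map and its linear part exhibit identical injectivity behaviour on every subset of $X$, and second, that the homoclinicity classes of $(X,\alpha)$ are precisely the cosets of the homoclinic group $\Delta(X,\alpha)$, as recorded just before Lemma~\ref{l:char-homoclinic-shift}.

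For the equivalence of (a) and (b), I would write $\tau(x) = \lambda(x) + t$ and observe that, for any $x, y \in X$, one has $\tau(x) = \tau(y)$ if and only if $\lambda(x) = \lambda(y)$, since translation by $t$ is a bijection of $X$. As this equivalence holds pointwise, $\tau$ and $\lambda$ agree as to whether they are injective on any given homoclinicity class; hence $\tau$ is pre-injective if and only if $\lambda$ is pre-injective.

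For the equivalence of (b) and (c), I would fix a homoclinicity class, which by the above is a coset $x_0 + \Delta(X,\alpha)$, and consider two of its points $x_0 + \delta_1$ and $x_0 + \delta_2$ with $\delta_1, \delta_2 \in \Delta(X,\alpha)$. Using that $\lambda$ is a group morphism, the equality $\lambda(x_0 + \delta_1) = \lambda(x_0 + \delta_2)$ is equivalent to $\lambda(\delta_1 - \delta_2) = 0_X$, that is, to $\delta_1 - \delta_2 \in \Ker(\lambda)$. Since $\Delta(X,\alpha)$ is a subgroup, the difference $\delta_1 - \delta_2$ ranges exactly over $\Delta(X,\alpha)$; thus $\lambda$ is injective on this class precisely when $\Ker(\lambda) \cap \Delta(X,\alpha) = \{0_X\}$. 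As this condition does not depend on the chosen base point $x_0$, pre-injectivity of $\lambda$ is equivalent to (c).

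I do not anticipate a genuine obstacle here: the statement is elementary once the translation-invariance of homoclinicity and the coset description of homoclinicity classes are in hand. The only point requiring a little care is to use that $\Delta(X,\alpha)$ is a subgroup, so that $\delta_1 - \delta_2$ indeed sweeps out all of $\Delta(X,\alpha)$; this is what guarantees that the kernel condition is both necessary and sufficient and is independent of $x_0$.
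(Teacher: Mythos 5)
Your proof is correct, and it is precisely the routine verification that the paper omits: Proposition~\ref{p:pre-inj-affine} is stated there as one of two ``obvious criteria'' with no proof given. Your argument uses exactly the ingredients the authors intend — the coset description of homoclinicity classes via the translation-invariant metric (i.e., $x$ and $y$ are homoclinic if and only if $x-y\in\Delta(X,\alpha)$) and the fact that translation by $t$ does not affect injectivity on any subset — so there is nothing to add or correct.
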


\begin{proposition}
\label{p:charact-equiv-affine}
Let $(X,\alpha)$ be an algebraic dynamical system and let $\tau \colon X \to X$ be an affine map
with linear part $\lambda \colon X \to X$ and translational part $t \in X$.
Then the following conditions are equivalent:
\begin{enumerate}[\rm (a)]
\item
$\tau$ is $\Gamma$-equivariant;
\item
$\lambda$ is $\Gamma$-equivariant and $t \in \Fix(X,\alpha)$.
\end{enumerate}
\end{proposition}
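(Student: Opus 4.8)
The plan is to compare the two sides of the equivariance condition $\tau(\gamma x) = \gamma \tau(x)$ directly by substituting the affine form $\tau(x) = \lambda(x) + t$ and exploiting the two structural facts at our disposal: each $\gamma \in \Gamma$ acts on $X$ by a \emph{group automorphism}, so the action is additive and fixes $0_X$, while $\lambda$ is a \emph{group morphism}, so $\lambda(0_X) = 0_X$. Writing out both sides gives $\tau(\gamma x) = \lambda(\gamma x) + t$ and, using additivity of the $\Gamma$-action, $\gamma \tau(x) = \gamma \lambda(x) + \gamma t$. Hence the $\Gamma$-equivariance of $\tau$ is equivalent to the identity
\begin{equation*}
\lambda(\gamma x) + t = \gamma \lambda(x) + \gamma t \quad \text{for all } \gamma \in \Gamma \text{ and } x \in X.
\end{equation*}

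For the implication (b) $\Rightarrow$ (a), I would simply insert $\lambda(\gamma x) = \gamma \lambda(x)$ and $\gamma t = t$ into the right-hand side of this identity; both sides then reduce to $\gamma \lambda(x) + t$, so the displayed equation holds and $\tau$ is $\Gamma$-equivariant.

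For the converse (a) $\Rightarrow$ (b), the key step is to specialize the displayed identity at $x = 0_X$. Since $\gamma$ fixes $0_X$ and $\lambda(0_X) = 0_X$, the identity collapses to $t = \gamma t$ for every $\gamma \in \Gamma$, which is precisely the statement $t \in \Fix(X,\alpha)$. Feeding $\gamma t = t$ back into the identity then cancels the translational terms and leaves $\lambda(\gamma x) = \gamma \lambda(x)$ for all $\gamma$ and $x$, i.e.\ $\lambda$ is $\Gamma$-equivariant.

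There is essentially no obstacle here, the argument being purely algebraic; the only points one must not overlook are that the action is by automorphisms (hence additive and fixing the neutral element) and that $\lambda$, being a morphism, annihilates $0_X$. These are exactly the features that make the specialization at $x = 0_X$ decouple the linear and translational data, which is why the proposition is recorded as an \emph{obvious} criterion.
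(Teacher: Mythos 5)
Your proof is correct and complete; the paper itself offers no proof, recording this proposition as one of two ``obvious criteria,'' and your argument (expanding both sides using that the action is by group automorphisms, then specializing at $x = 0_X$ to decouple the linear and translational parts) is exactly the standard verification the authors had in mind.
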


\subsection{Topological rigidity}
One says that the algebraic dynamical system $(X,\alpha)$ is \emph{topologically rigid} if every endomorphism $\tau \colon X \to X$ of $(X,\alpha)$ is affine. 
\par
The following result is due to Bhattacharya~\cite[Corollary~1]{bhattacharya}.

\begin{theorem}
\label{t:bhat-rigid}
Let $\Gamma$ be a countable group.
Let $(X,\alpha)$ be an algebraic dynamical system with acting group $\Gamma$.
Suppose that $(X,\alpha)$ is expansive and $X$ is connected.
Then   $(X, \alpha)$ is topologically rigid. 
\end{theorem}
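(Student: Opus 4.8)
The plan is to reduce the statement to the vanishing of a single ``defect'' map and then kill this defect by harmonic analysis on $X$. Fix an endomorphism $\tau \colon X \to X$ and set $t \coloneqq \tau(0_X)$ and $\lambda \coloneqq \tau - t$. Saying that $\tau$ is affine is exactly saying that $\lambda$ is a continuous group morphism, i.e.\ that the symmetric defect
\[
\phi(x,y) \coloneqq \tau(x+y) - \tau(x) - \tau(y) + \tau(0_X)
\]
vanishes for all $x,y \in X$. Note that $\phi$ is continuous, vanishes on the two axes ($\phi(x,0_X) = \phi(0_X,y) = 0_X$), and is $\Gamma$-equivariant for the diagonal action on the source $X \times X$, since $\tau$ is equivariant. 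Observe also that, once $\tau$ is known to be affine, the hypotheses of Proposition~\ref{p:charact-equiv-affine} come for free: equivariance of $\tau$ forces $t = \tau(0_X) \in \Fix(X,\alpha)$ (because $\tau(\gamma 0_X) = \gamma \tau(0_X)$) and then $\lambda$ is automatically equivariant. Thus the entire content of the theorem is the assertion $\phi \equiv 0_X$. It is worth recording that $X \times X$, equipped with the diagonal $\Gamma$-action, is again a connected compact metrizable abelian group on which $\Gamma$ acts expansively, so that $\phi$ is an equivariant continuous map between expansive connected algebraic systems.

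Next I would analyse $\phi$ via Fourier expansion on the compact group $X \times X$. Since $X$ is connected, its Pontryagin dual $\widehat{X}$ is a countable discrete \emph{torsion-free} abelian group (see Subsection~\ref{subsec:pontryagin}), and the characters of $X \times X$ are naturally indexed by $\widehat{X} \times \widehat{X}$, forming an orthonormal basis of $L^2(X \times X)$ for normalized Haar measure. To detect $\phi \equiv 0_X$ it suffices to show that $\chi \circ \phi \equiv 0_\T$ for every $\chi \in \widehat{X}$, since characters separate points. Lifting through $e \colon \T \to \C$, $e(s) \coloneqq \exp(2\pi i s)$, each $g_\chi \coloneqq e(\chi \circ \phi)$ is a continuous unit-modulus function on $X \times X$, and I would expand it as $g_\chi = \sum_{(\psi_1,\psi_2)} \widehat{g_\chi}(\psi_1,\psi_2)\, e(\psi_1 \otimes \psi_2)$. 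The equivariance of $\phi$ together with the invariance of Haar measure under the (measure-preserving) automorphisms $\gamma$ translates, via the dual action $(\gamma \chi)(x) = \chi(\gamma^{-1}x)$, into an equivariance relation on the Fourier coefficients: the Fourier support of $g_{\gamma \chi}$ is the diagonal $\Gamma$-translate of that of $g_\chi$. In particular the union of all Fourier supports is a $\Gamma$-invariant subset of $\widehat{X} \times \widehat{X}$.

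The decisive step is to show that, modulo the trivial mode, these Fourier supports are empty. Here both hypotheses enter. Expansiveness — equivalently, by Theorem~\ref{t:pads-expansive} in the principal case and by its analogue in general, a strong finite-range/Lipschitz control on the dynamics — is used to bound the Fourier support of $g_\chi$ and to guarantee the requisite convergence, forcing the support to be a controlled, ``finite-per-orbit'' $\Gamma$-invariant set. Connectedness, i.e.\ torsion-freeness of $\widehat{X}$, is then used to rule out the nonlinear modes: the axis-vanishing of $\phi$ kills all coefficients supported on $\widehat{X} \times \{0\}$ and $\{0\} \times \widehat{X}$, and the torsion-free structure of $\widehat{X}$ prevents the survival of any ``twisted'' mode that a finite-order element of the dual could otherwise sustain. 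One concludes that $g_\chi \equiv 1$, hence $\chi \circ \phi \equiv 0_\T$ for all $\chi$, hence $\phi \equiv 0_X$, so $\tau$ is affine. As a cross-check and alternative handle, one may use that the continuous equivariant $\tau$ preserves homoclinicity, so that $\lambda$ maps the homoclinic group $\Delta(X,\alpha)$ into itself; since for expansive connected systems $\Delta(X,\alpha)$ is dense in $X$, continuity reduces the identity $\phi \equiv 0_X$ to a computation on the countable dense subgroup $\Delta(X,\alpha) \times \Delta(X,\alpha)$.

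The main obstacle is precisely this last vanishing assertion for the equivariant symmetric defect $\phi$. Continuity, $\Gamma$-equivariance, and vanishing on the axes are \emph{not} sufficient by themselves to force $\phi \equiv 0_X$; the heart of the matter is to convert the qualitative hypotheses ``expansive'' and ``connected'' into quantitative harmonic-analytic statements — a genuine control of the Fourier support (expansiveness) combined with the absence of torsion obstructions (connectedness). This is exactly the delicate estimate supplied by Bhattacharya's higher-order mixing machinery, and it is the step where all of the difficulty of Theorem~\ref{t:bhat-rigid} is concentrated; the reductions in the preceding paragraphs are routine by comparison.
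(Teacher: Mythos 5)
Your reductions are fine as far as they go: writing $\tau = \lambda + t$, noting that equivariance forces $t = \tau(0_X) \in \Fix(X,\alpha)$ and hence the equivariance of $\lambda$, and observing that affineness of $\tau$ is equivalent to the vanishing of the defect $\phi(x,y) = \tau(x+y)-\tau(x)-\tau(y)+\tau(0_X)$, which is a continuous map, equivariant for the diagonal action, vanishing on the axes (and the diagonal action on $X \times X$ is indeed expansive). But this is where the proof stops being a proof. The entire content of Theorem~\ref{t:bhat-rigid} is the vanishing of $\phi$, and at exactly that point you offer only a programme: ``expansiveness bounds the Fourier support of $g_\chi$'' and ``torsion-freeness kills the twisted modes'' are assertions, not arguments --- no mechanism is given by which expansiveness (a metric separation property) controls the spectrum of $e(\chi\circ\phi)$, and your own closing paragraph concedes that this step is ``supplied by Bhattacharya's higher-order mixing machinery.'' Since the statement to be proved \emph{is} Bhattacharya's theorem, invoking his machinery for the decisive estimate is circular. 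For what it is worth, the paper itself does not prove this theorem either: it is quoted verbatim from \cite[Corollary~1]{bhattacharya}. So either one cites Bhattacharya (in which case your reductions are superfluous) or one proves the vanishing of $\phi$, and the latter is precisely the missing idea here.

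Your proposed ``alternative handle'' is moreover genuinely wrong, not just incomplete. It is false that $\Delta(X,\alpha)$ is dense in $X$ for every expansive system with connected phase space: take $\Gamma = \Z^2$ acting on the solenoid $X = \widehat{\Z[1/6]}$, i.e.\ the dual of the $R_2$-module $R_2/(u_1-2,\,u_2-3)$ (Furstenberg's $\times 2,\times 3$ system). This action is expansive (the variety of the ideal is the single point $(2,3)$, off the unit torus) and $X$ is connected ($\Z[1/6]$ is torsion-free), yet the action has zero entropy and hence \emph{trivial} homoclinic group by the results of \cite{lind-schmidt}; in general, density of $\Delta(X,\alpha)$ requires completely positive entropy, not merely expansiveness and connectedness. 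So the reduction of $\phi \equiv 0_X$ to a computation on $\Delta(X,\alpha) \times \Delta(X,\alpha)$ collapses --- indeed, on this example it would prove nothing, while Bhattacharya's theorem still applies to it. (This also shows that the density of the homoclinic group, which is available for the \emph{principal} expansive systems via Theorem~\ref{t:homoclinic-group-dual}, cannot be the engine behind Theorem~\ref{t:bhat-rigid}, which holds for arbitrary algebraic systems over arbitrary countable groups.)
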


As a  consequence, we get the following result.

\begin{corollary}
\label{c:endo-princ}
Let $\Gamma$ be a countable abelian group and let $f \in \Z[\Gamma]$.
Suppose that the principal algebraic dynamical system $(X_f,\alpha_f)$ associated with $f$ is expansive with $X_f$ connected and let $\tau \colon X_f \to X_f$ be a map.
Then the following conditions are equivalent:
\begin{enumerate}[\rm(a)]
\item
$\tau$ is an endomorphism of the dynamical system $(X_f,\alpha_f)$;
\item
 there exist $r \in \Z[\Gamma]$ and $t \in \Fix(X_f,\alpha_f)$ such that
$\tau(x) = r x + t$ for all $x \in X_f$.
\end{enumerate}
\end{corollary}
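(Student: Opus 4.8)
The plan is to prove the equivalence (a) $\Leftrightarrow$ (b) by exploiting the three ingredients already assembled: Bhattacharya's rigidity theorem (Theorem~\ref{t:bhat-rigid}), the affine characterization lemmas (Propositions~\ref{p:pre-inj-affine} and~\ref{p:charact-equiv-affine}), and the structure of $\Delta(X_f,\alpha_f)$ as the dual module (Corollary~\ref{c:homoclinic-group-dual}). The direction (b) $\Rightarrow$ (a) is the easy one: given $r \in \Z[\Gamma]$ and $t \in \Fix(X_f,\alpha_f)$, the map $x \mapsto rx$ is a continuous group endomorphism of $X_f$ (since the $\Z[\Gamma]$-action is by continuous automorphisms and $r$ acts by convolution), and adding a fixed point $t$ keeps the map $\Gamma$-equivariant by Proposition~\ref{p:charact-equiv-affine}. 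So $\tau$ is continuous and $\Gamma$-equivariant, hence an endomorphism.

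\textbf{The forward direction.} For (a) $\Rightarrow$ (b), I would first invoke Theorem~\ref{t:bhat-rigid}: since $(X_f,\alpha_f)$ is expansive and $X_f$ is connected, the system is topologically rigid, so $\tau$ is affine. Write $\tau(x) = \lambda(x) + t$ with $\lambda$ a continuous group endomorphism of $X_f$ and $t = \tau(0_{X_f}) \in X_f$. Proposition~\ref{p:charact-equiv-affine} then tells me that, because $\tau$ is $\Gamma$-equivariant, the linear part $\lambda$ is $\Gamma$-equivariant and $t \in \Fix(X_f,\alpha_f)$. This already delivers the translational part of the claim; the remaining work is to show that the $\Gamma$-equivariant continuous group endomorphism $\lambda$ is exactly multiplication by some $r \in \Z[\Gamma]$.

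\textbf{Identifying the linear part via duality.} The heart of the argument is to show that every $\Gamma$-equivariant continuous group endomorphism $\lambda \colon X_f \to X_f$ is of the form $x \mapsto rx$ for some $r \in \Z[\Gamma]$. The natural route is Pontryagin duality: $\lambda$ dualizes to a $\Z[\Gamma]$-module endomorphism $\widehat{\lambda} \colon M_f \to M_f$ of the cyclic module $M_f = \Z[\Gamma]/\Z[\Gamma]f$. Since $\Gamma$ is abelian, $\Z[\Gamma]$ is commutative, and any $\Z[\Gamma]$-module endomorphism of a cyclic module $\Z[\Gamma]/\Z[\Gamma]f$ is determined by the image of the generator $\overline{1}$; writing $\widehat{\lambda}(\overline{1}) = \overline{r}$ for some $r \in \Z[\Gamma]$, commutativity forces $\widehat{\lambda}(\overline{h}) = \overline{hr} = \overline{rh}$ for all $h$, i.e.\ $\widehat{\lambda}$ is multiplication by $r$. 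Dualizing back and using that the $\Gamma$-action on $X_f$ is dual to the module structure on $M_f$, one checks that $\lambda$ itself is then multiplication by $r$ (the adjoint relation $\langle \widehat{\lambda}(\chi), x\rangle = \langle \chi, \lambda(x)\rangle$ transports scalar multiplication to scalar multiplication). I expect the main obstacle to be this last identification step: one must verify carefully that the dual of ``multiplication by $r$ on $M_f$'' is precisely ``multiplication by $r$ on $X_f$'' under the pairing, rather than, say, multiplication by $r^*$. This is a computation with the bilinear pairing $\langle \cdot, \cdot\rangle$ and the convolution action, and the appearance of the involution $f \mapsto f^*$ elsewhere in the paper (Corollary~\ref{c:homoclinic-group-dual}) is a warning that the correspondence between scalar actions on a module and on its dual can twist by the involution. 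Once the correct $r$ is pinned down, combining it with $t \in \Fix(X_f,\alpha_f)$ yields $\tau(x) = rx + t$, completing the proof.
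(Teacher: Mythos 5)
Your proposal follows essentially the same route as the paper's proof: Bhattacharya's rigidity (Theorem~\ref{t:bhat-rigid}) plus Proposition~\ref{p:charact-equiv-affine} to obtain the affine form with translational part $t \in \Fix(X_f,\alpha_f)$, then Pontryagin duality together with the fact that every $\Z[\Gamma]$-module endomorphism of the cyclic module $\Z[\Gamma]/\Z[\Gamma]f$ is multiplication by some $r \in \Z[\Gamma]$, dualized back through $\lambda = \widehat{\widehat{\lambda}}$. The involution subtlety you flag is genuine but harmless: since the statement only asserts the existence of \emph{some} $r \in \Z[\Gamma]$, it is immaterial whether the dual of multiplication by $r$ on $\widehat{X_f}$ turns out to be multiplication by $r$ or by $r^*$ on $X_f$ (the paper itself elides this point).
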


\begin{proof}
For each $r \in \Z[\Gamma]$,  the self-mapping of $X$ given by  $x \mapsto r x$ is 
$\Gamma$-equivariant since $\Z[\Gamma]$ is commutative.
Therefore,
the fact that (b) implies (a) follows from Proposition~\ref{p:charact-equiv-affine}.
\par
To prove the converse, suppose that $\tau$ is an endomorphism of the dynamical 
system $(X_f,\alpha_f)$. 
It follows from Theorem~\ref{t:bhat-rigid} that $\tau$ is affine. 
Therefore, by using Proposition~\ref{p:charact-equiv-affine}, there exist a continuous $\Z[\Gamma]$-module  morphism 
$\lambda \colon X_f \to X_f$ and $t \in \Fix(X_f,\alpha_f)$ such that
$\tau(x) = \lambda(x) + t$ for all $x \in X_f$.
As the ring $\Z[\Gamma]$ is commutative and 
$\widehat{X_f} = \Z[\Gamma]/ \Z[\Gamma] f$ is a cyclic $\Z[\Gamma]$-module,
there is  $r \in \Z[\Gamma]$ such that $\widehat{\lambda}(\chi) = r \chi$ for all $\chi \in \widehat{X_f}$.
Since $\lambda = \widehat{\widehat{\lambda}}$, it follows that $\lambda(x) = r x$
and hence $\tau(x) = r x + t$  for all $x \in X_f$.
\end{proof}

\section{Proof of the main result}
\label{sec:proof}

In this section, we present the proof of Theorem~\ref{t:main-result}.
So let $\Gamma$ be a countable abelian group  and $f \in \Z[\Gamma]$
such that $X_f$ is connected and $(X_f,\alpha_f)$ is expansive.
Also let $\tau$ be an endomorphism of $(X_f,\alpha_f)$, i.e.,
 a $\Gamma$-equivariant continuous map $\tau \colon X_f \to X_f$.
 We want to show that $\tau$ is surjective if and only if it is pre-injective.
 \par
By Corollary~\ref{c:endo-princ},
there exists $r \in \Z[\Gamma]$ such that
$\tau$ is affine with linear part $\lambda \colon X_f \to X_f$ given by
$\lambda(x) = r x$ for all $x \in X_f$.
Clearly $\tau$ is surjective if and only if $\lambda$ is.
As $X_f$ is compact,
we know that the surjectivity of $\lambda$ is equivalent to the injectivity of its Pontryagin dual
$\widehat{\lambda} \colon \widehat{X_f} \to \widehat{X_f}$.
Now we observe that $\widehat{\lambda}(\chi) = r \chi$ for all $\chi  \in \widehat{X_f}$.
As the $\Z[\Gamma]$-modules $\widehat{X_f}$ and $\Delta(X_f,\alpha_f)$ are isomorphic
by Corollary~\ref{c:homoclinic-group-dual},
the injectivity of $\widehat{\lambda}$ is equivalent to the injectivity of the endomorphism $\mu$ of
$\Delta(X_f,\alpha_f)$ defined by  $\mu(x) \coloneqq   r x$ for all $x \in \Delta(X_f,\alpha_f)$.
As $\mu$ is the restriction of $\lambda$ to
$\Delta(X_f,\alpha_f)$,
we conclude that the surjectivity  of $\tau$ is equivalent to the pre-injectivity of $\tau$ by 
using Proposition~\ref{p:pre-inj-affine}.

\section{Concluding remarks}

\subsection{Surjunctivity}
A dynamical system $(X,\alpha)$ is called \emph{surjunctive} if every injective endomorphism of 
$(X,\alpha)$ is surjective (and hence a homeomorphism).
As injectivity implies pre-injectivity,
we deduce from   Theorem~\ref{t:main-result}
that if $\Gamma$ is a countable abelian group and $f \in \Z[\Gamma]$ is such that  $(X_f,\alpha_f)$ is expansive
and $X_f$ is connected, then 
the dynamical system  $(X_f,\alpha_f)$ is surjunctive.
Actually, in the case $\Gamma = \Z^d$, this last result is a particular case 
of Theorem~1.5 in~\cite{bcsc-surjunctivity}
which asserts that if $\Gamma = \Z^d$ and $M$ is a finitely generated $\Z[\Gamma]$-module, then 
$(\widehat{M},\alpha_M)$ is surjunctive
(it is not required here that $\widehat{M}$ is connected nor that $(\widehat{M},\alpha_M)$ is expansive).

\subsection{Mixing}
Let $(X,\alpha)$ be an algebraic dynamical system with acting group $\Gamma$ and denote by $\mu$ the Haar probability measure on $X$.
One says that $(X,\alpha)$ is \emph{mixing} if
\begin{equation}
\label{e:def-mixing}
\lim\limits_{\gamma \to\infty}
\mu(B_{1} \cap \gamma B_{2})
=\mu(B_{1})\cdot\mu(B_{2})
\end{equation}
for all measurable subsets $B_1,B_2 \subset X$.
\par
Observe that if $A$ is a compact metrizable abelian group and $\Gamma$ is any infinite countable group, then the $\Gamma$-shift $(A^\Gamma,\sigma)$ is mixing
since~\eqref{e:def-mixing}  is trivially satisfied when 
$B_1$ and $B_2$ are cylinders.
\par
If $f \in \Z[\Gamma]$ is such that the system $(X_f,\alpha_f)$ is expansive (i.e., $f$ is inevertible in $\ell^1(\Gamma)$, then $(X_f,\alpha_f)$ is mixing
(cf.~\cite[Proposition~4.6]{lind-schmidt-survey-heisenberg}).
\par 
The examples below show that Theorem~\ref{t:main-result} becomes false if the hypothesis that the system 
$(X_f,\alpha_f)$ is expansive is replaced by the weaker hypothesis that it is mixing.

\begin{example}
Let $\T = \R/\Z$, $\Gamma = \Z^d$, $d \geq 1$,  
and consider the $\Gamma$-shift $(\T^\Gamma,\sigma)$
(this is  $(X_f,\alpha_f)$ for $f = 0 \in \Z[\Gamma]$).
Then the endomorphism $\tau$ of $(\T^\Gamma,\sigma)$ defined by
$\tau(x)(\gamma) = 2x(\gamma)$ for all $x \in \T^\Gamma$ and $\gamma \in \Gamma$,
is clearly surjective. 
However, $\tau$ is not pre-injective since the configuration $y \in \T^\Gamma$, defined by 
$y(\gamma) = 1/2 \mod 1$ if $\gamma = 0_\Gamma$ and $0$ otherwise, is a non-trivial element in the homoclinic group 
of $(\T^\Gamma,\sigma)$ and satisfies 
$\tau(y) = \tau(0) = 0$.
It follows that $(\T^\Gamma,\sigma)$ does not have the Moore property.
\end{example}

\begin{example}[cf.~]
Let $\Gamma = \Z$ and consider the  polynomial
\[
f = 1 -2u_1 + u_1^2 - 2u_1^3 + u_1^4 \in  \Z[u_1, u_1^{-1}] = \Z[\Gamma].
\]
The associated algebraic dynamical system $(X_f,\alpha_f)$ is conjugate 
to the system $(\T^4,\beta)$,
where $\beta$ is the action of $\Z$ on $\T^4$ generated by the companion matrix of $f$.
It is mixing since $f$ is not divisible by a cyclotomic polynomial
(cf.~\cite[Theorem~6.5.(2)]{schmidt-book}).
On the other hand,  $f$  has four distinct roots in $\C$,
two   on the unit circle, one inside and one outside.
As $f$ is irreducible over $\Q$, it follows that    
  the homoclinic group $\Delta(X_f,\alpha_f)$  is reduced to $0$ 
  (cf.~\cite[Example 3.4]{lind-schmidt}).
The trivial endomorphism of $(X_f,\alpha_f)$, that maps each $x \in X_f$ to $0$,
is pre-injective but not surjective.
Consequently, $(X_f,\alpha_f)$ does not have the Myhill property.
However,  $(X_f,\alpha_f)$ has the Moore property
since  each homoclinicity class of $(X_f,\alpha_f)$ is reduced to a single point,
so that every map with source set $X_f$  is pre-injective.
Note  that $(X_f,\alpha_f) = (\T^4,\beta)$ is topologically rigid since
every mixing toral automorphism is topologically rigid by
a result of Walters~\cite{walters}. 
\end{example}

\bibliographystyle{siam}

\begin{thebibliography}{10}

\bibitem{bartholdi-ejm-2010}
{\sc L.~Bartholdi}, {\em Gardens of {E}den and amenability on cellular
  automata}, J. Eur. Math. Soc. (JEMS), 12 (2010), pp.~241--248.

\bibitem{bartholdi:2016}
{\sc L.~Bartholdi and D.~Kielak}, {\em Amenability of groups is characterized
  by {M}yhill's theorem}, arXiv:1605.09133.

\bibitem{bhattacharya}
{\sc S.~Bhattacharya}, {\em Orbit equivalence and topological conjugacy of
  affine actions on compact abelian groups}, Monatsh. Math., 129 (2000),
  pp.~89--96.

\bibitem{bcsc-surjunctivity}
{\sc S.~Bhattacharya, T.~Ceccherini-Silberstein, and M.~Coornaert}, {\em
  Surjunctivity and topological rigidity of algebraic dynamical systems},
  arXiv:1702.06201, to appear in {E}rgodic {T}heory and {D}ynamical {S}ystems.

\bibitem{csc-ijm-2015}
{\sc T.~Ceccherini-Silberstein and M.~Coornaert}, {\em Expansive actions of
  countable amenable groups, homoclinic pairs, and the {M}yhill property},
  Illinois J. Math., 59 (2015), pp.~597--621.

\bibitem{csc-anosov-tori}
{\sc T.~Ceccherini-Silberstein and M.~Coornaert}, {\em A {g}arden of {E}den
  theorem for {A}nosov diffeomorphisms on tori}, Topology Appl., 212 (2016),
  pp.~49--56.

\bibitem{ceccherini}
{\sc T.~Ceccherini-Silberstein, A.~Mach{\`{\i}}, and F.~Scarabotti}, {\em
  Amenable groups and cellular automata}, Ann. Inst. Fourier (Grenoble), 49
  (1999), pp.~673--685.

\bibitem{deninger-schmidt}
{\sc C.~Deninger and K.~Schmidt}, {\em Expansive algebraic actions of discrete
  residually finite amenable groups and their entropy}, Ergodic Theory Dynam.
  Systems, 27 (2007), pp.~769--786.

\bibitem{gromov-esav}
{\sc M.~Gromov}, {\em Endomorphisms of symbolic algebraic varieties}, J. Eur.
  Math. Soc. (JEMS), 1 (1999), pp.~109--197.

\bibitem{lind-schmidt}
{\sc D.~Lind and K.~Schmidt}, {\em Homoclinic points of algebraic {${\bf
  Z}^d$}-actions}, J. Amer. Math. Soc., 12 (1999), pp.~953--980.

\bibitem{lind-schmidt-survey-heisenberg}
{\sc D.~Lind and K.~Shmidt}, {\em A survey of algebraic actions of the discrete
  {H}eisenberg group}, Uspekhi Mat. Nauk, 70 (2015), pp.~77--142.

\bibitem{machi-mignosi}
{\sc A.~Mach{\`{\i}} and F.~Mignosi}, {\em Garden of {E}den configurations for
  cellular automata on {C}ayley graphs of groups}, SIAM J. Discrete Math., 6
  (1993), pp.~44--56.

\bibitem{moore}
{\sc E.~F. Moore}, {\em Machine models of self-reproduction}, vol.~14 of Proc.
  Symp. Appl. Math., American Mathematical Society, Providence, 1963,
  pp.~17--34.

\bibitem{morris}
{\sc S.~A. Morris}, {\em Pontryagin duality and the structure of locally
  compact abelian groups}, Cambridge University Press, Cambridge-New
  York-Melbourne, 1977.
\newblock London Mathematical Society Lecture Note Series, No. 29.

\bibitem{myhill}
{\sc J.~Myhill}, {\em The converse of {M}oore's {G}arden-of-{E}den theorem},
  Proc. Amer. Math. Soc., 14 (1963), pp.~685--686.

\bibitem{schmidt-book}
{\sc K.~Schmidt}, {\em Dynamical systems of algebraic origin}, vol.~128 of
  Progress in Mathematics, Birkh\"auser Verlag, Basel, 1995.

\bibitem{walters}
{\sc P.~Walters}, {\em Topological conjugacy of affine transformations of
  tori}, Trans. Amer. Math. Soc., 131 (1968), pp.~40--50.

\end{thebibliography}

\end{document}